 \newtheorem{thm}{Theorem}[section]
 \newtheorem{lemma}[thm]{Lemma}
\theoremstyle{definition}
\newtheorem{defn}[thm]{Definition}
\newtheorem{question}{Question}
\newcommand{\A}{\mathcal A}
\newcommand{\B}{\mathcal B}
\newcommand{\Ll}{\mathcal{L}}
\newcommand{\K}{\mathcal{K}}
 \newcommand{\C}{\mathbb{C}}
 \newcommand{\al}{\alpha}
\newcommand{\de}{\delta}
 \newcommand{\be}{\beta}
 \newcommand{\As}{\A^{\#}}
 \newcommand{\hot}{\hat{\otimes}}
\begin{document}
Banach algebras 2009, 441-454, Banach Center Publ., 91, Polish Acad. Sci. Inst. Math., Warsaw, 2010.

\title[Generalized amenability]{Solved and unsolved problems in generalized notions of amenability for Banach algebras}


\author{Yong Zhang}
\address{Department of Mathematics, University of Manitoba\\
Winnipeg MB, R3T 2N2 Canada\\
E-mail: zhangy@cc.umanitoba.ca}

\keywords{derivation, approximately amenable, pseudo-amenable, pseudo-contactible, bounded, sequential.}
\subjclass{Primary 46H20; Secondary 43A20}
\thanks{Supported by NSERC grant 238949-2005.}

\begin{abstract}
We survey the recent investigations on (bounded, sequential) approximate amenability/contractibility and pseudo-amenability/contractibility for Banach algebras. We will discuss the core problems concerning these notions and address the significance of any solutions to them to the development of the field. A few new results are also included.
\end{abstract}

\maketitle

\section{Introduction}

The concept of amenability for Banach algebras was introduced by B. E. Johnson in 1972. Since his groundwork  \cite{Johns_M} was published, the notion has proved to be of enormous importance in the theory of Banach algebras, operator algebras and abstract harmonic analysis. It reflects intrinsic features of many types of Banach algebras. For example, The group algebra $L^1(G)$ on a locally compact group $G$ is amenable if and only if $G$ is an amenable group \cite{Johns_M}; the Fourier algebra $A(G)$ is amenable if and only if $G$ has an abelian subgroup of finite index \cite{LLW, FR}; a uniform algebra is amenable if and only if it is isomorphic to $C_0(X)$ for a locally compact Hausdorff space $X$ \cite{se_uniform}; a C*-algebra is amenable if and only if it is nuclear \cite{Con, haage}. However, it has been also realized that in many instances amenability is too restrictive. It is essentially a kind of finiteness condition on a Banach algebra. Many efforts have been made in the literature to extend or to modify the concept of amenability. Weak amenability was introduced in \cite{BCD}; $n$-weak amenability was introduced in \cite{D-G-G}; Operator amenability was introduced in \cite{Ruan} and Connes amenability in \cite{HEL_C,JKR,RUN}. In this survey paper we will focus on generalized amenability for Banach algebras, discussing solved and unsolved problems in this recently developed field.

Let $\A$ be a Banach algebra and let $X$ be a Banach $\A$ bimodule. A linear mapping $D$: $\A \to X$ is a \emph{derivation} if it satisfies $D(ab) = aD(b) + D(a)b$ for  all $a, b\in \A$. Given an $x\in X$, the mapping $ad_x$: $a\mapsto ax-xa$ ($a\in \A$) is a continuous derivation, called an \emph{inner derivation}. The algebra $\A$ is called \emph{contractible} if every continuous derivation $D$: $\A \to X$ is inner for each Banach $\A$ bimodule $X$ \cite{HEL, CAL}. The algebra $\A$ is called \emph{amenable} if every continuous derivation $D$: $\A \to X^*$ is inner for each Banach $\A$ bimodule $X$, where $X^*$ is the dual module of $X$ \cite{Johns_M}. So far the only known contractible Banach algebras are the direct sums of finite full matrix algebras. We call a derivation $D$: $\A \to X$ \emph{approximately inner} if there is a net $(x_i)\subset X$ such that, for each $a\in \A$,
 \begin{equation}\label{approx inner}
 D(a) = \lim_i ad_{x_i}(a) \quad ( \text{i.e. } D(a)=\lim_i ax_i - x_i a) 
\end{equation}
 in the norm topology of $X$.
If in the above definition $(x_i)$ can been chosen so that $(ad_{x_i})$ is bounded as a net of operators from $\A$ into $X$ (note $(x_i)$ is not necessarily bounded in the case), then $D$ is called \emph{boundedly approximately inner}. If $(x_i)$ can been chosen to be a sequence, then $D$ is called \emph{sequentially approximately inner}. In the definition we may require the limit in (\ref{approx inner}) hold in other topologies of $X$. For example, if the convergence of (\ref{approx inner}) is only required in weak topology of $X$, then we call $D$ \emph{weakly approximately inner};
If $X$ is a dual $\A$-module and the convergence of (\ref{approx inner}) is only required in weak* topology of $X$, then we call $D$ \emph{weak* approximately inner}.
If the convergence of (\ref{approx inner}) is uniform in $a$ on the unit ball of $\A$, then we call $D$ \emph{uniformly approximately inner}.

\begin{defn}
A Banach algebra $\A$ is called
(resp. boundedly, sequentially, uniformly or weakly) approximately contractible if every continuous derivation $D$: $\A\to X$ is (resp. boundedly, sequentially, uniformly or weakly) approximately inner for each Banach $\A$-bimodule $X$.
\end{defn}
We will sometimes abbreviate the phrase (boundedly, sequentially, uniformly or weakly) approximately contractible to (bdd., seq., unif. or w.) a. c..

\begin{defn}
A Banach algebra $\A$ is called (resp. boundedly, sequentially, uniformly or weak*) approximately amenable if every continuous derivation $D$: $\A\to X^*$ is (resp. boundedly, sequentially, uniformly or weak*) approximately inner for each Banach $\A$-bimodule $X$.
\end{defn}
We may also use the abbreviated notation (bdd., seq., unif., or w*.) a. a. to denote (boundedly, sequentially, uniformly or weak*) approximately amenable.

These approximate versions of amenability and contractibility were introduced by F. Ghahramani and R. Loy in \cite{GL}. For a Banach algebra $\A$, we may associate a unit $e$ to it to consider its unitization algebra
 $\A^\sharp = \A \oplus \C e$. Clearly,
$\A$ is approximately contractible/amenable in any of the above modes if and only if $\A^\sharp$ is. In section~\ref{concrete} we will see examples of approximately amenable but not amenable Banach algebras.

Given a Banach algebra $\A$, the projective tensor product $\A\hot \A$ is naturally a Banach $\A$ bimodule. The multiplication mapping $\pi$: $\A \hot \A \to \A$ defined by $\pi(a\otimes b)= ab$ ($a,b\in \A$) is a contractive $\A$ bimodule morphism. It is known that $\A$ is contractible if and only if there is $u\in \A\hot\A$ such that $au-ua = 0$ and $\pi(u)a = a$ for all $a\in \A$ \cite{HEL}. 
Such $u$ is called a \emph{diagonal} for $A$. It is also well-known that
 $\A$ is amenable if and only if there is a bounded net $(u_i)\subset \A\hot\A$ such that $au_i-u_ia\to 0$ and $\pi(u_i)a\to a$ for all $a\in \A$ \cite{John_diag}. Such net $(u_i)$ is called a \emph{bounded approximate diagonal} for $\A$. Hence, contractibility and amenability may be defined in terms of the existence of a diagonal and the existence of a bounded approximate diagonal, respectively. These characterizations provide another way to generalize amenability for Banach algebras.
 
\begin{defn}
 A Banach algebra $\A$ is called \emph{pseudo-amenable} (briefly, ps. a.) if it has an approximate diagonal, i.e. if there is a net $(u_i)\subset \A\hot\A$ such that $au_i-u_ia\to 0$ and $\pi(u_i)a\to a$ for all $a\in \A$.
The algebra $\A$ is called \emph{pseudo-contractible} (briefly, ps. c.) if it has a central approximate diagonal, i.e. if there is a net $(u_i)\subset \A\hot\A$ such that $au_i-u_ia = 0$ and $\pi(u_i)a \to a$ for all $a\in \A$.
\end{defn}

The qualifier \emph{bounded} prefixed to the above notions specifies that there is a constant $K>0$ such that the net $(u_i)$ may be chosen so that $\|au_i-u_ia\|\leq K\|a\|$ and $\|\pi(u_i)a\|\leq K \|a\|$ for all $a\in \A$.
The qualifier \emph{sequential} prefixed to the notions will indicate that $(u_i)$ is a sequence. Pseudo-amenability and pseudo-contractibility were introduced by F. Ghahramani and the author in \cite{GZ}.

There are many pseudo-amenable and pseudo-contractible Banach algebras which are not amenable and not even approximately amenable. The simplest example is $\ell^1$ with the pointwise multiplication.

Some of the above generalized versions of amenability are equivalent. Some are the same if the Banach algebra has a bounded approximate identity. In Section~2 we will discuss these relations. In Section~3 we focus on elementary properties of generalized amenability, while in Section~4 we discuss generalized amenability of concrete types of Banach algebras.

\section{Relations}

 It was shown in \cite{GOU} that a Banach algebra $\A$ is amenable if and only if for every Banach $\A$ bimodule $X$ and every continuous derivation $D$: $\A \to X$ there is a bounded net $(x_i)\subset X$ such that $ad_{x_i}$ approaches $D$ in the strong operator topology, i.e. $D(a) = \lim_i ax_i-x_ia$ in the norm topology of $X$ for each $a\in \A$. This implies that every amenable Banach algebra is boundedly approximately contractible. By the principle of uniform boundedness we also see that $\A$ is boundedly approximately amenable/contractible if it is sequentially approximately amenable/contractible. In general, we clearly have the following implications.

\begin{equation}\label{relation 1}
\text{contractible} \Rightarrow\begin{Bmatrix} \text{amenable}\\
                                      \text{seq. a. c.}\end{Bmatrix}\Rightarrow \begin{Bmatrix} \text{seq. a. a.}\\
                                                                                \text{bdd. a. c.}\end{Bmatrix}\Rightarrow\begin{Bmatrix} \text{bdd. a. a.}\\ \text{a. c.}\end{Bmatrix}\Rightarrow
\text{a. a.}\; ,
\end{equation}
\begin{equation}\label{relation 2}
\text{contractible} \Rightarrow \text{unif. a. c.} \; ,\quad \text{and} \quad \text{amenable} \Rightarrow \text{unif. a. a.} 
\end{equation}

It is not trivial but turns out that the converses of the two implications in (\ref{relation 2}) are also true.

\begin{thm}\label{ua is a} Let $\A$ be a Banach algebra.
\begin{enumerate}
\item If $\A$ is uniformly approximately contractible, then it is contractible \cite{GL}.
\item If $\A$ is uniformly approximately amenable, then it is amenable \cite{Pir_approj} \cite{GLZ}.
\end{enumerate}
\end{thm}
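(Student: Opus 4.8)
The plan is to reduce each statement to the innerness of a single canonical derivation attached to the multiplication map $\pi$, and then to upgrade uniform approximate innerness to genuine (resp.\ virtual) innerness. I would work in the unitization $\As$, set $M=\As\hot\As$, let $\pi:M\to\As$ be the product map, and let $D:\As\to\ker\pi$ be the derivation $D(a)=a\otimes e-e\otimes a$. By the diagonal characterizations recalled in the Introduction, $\A$ is contractible exactly when $D$ is inner with values in $\ker\pi$ (an inner-izer $u'\in\ker\pi$ produces the diagonal $u=e\otimes e-u'$, which satisfies $au-ua=0$ and $\pi(u)=e$), and $\A$ is amenable exactly when the corresponding statement holds in the bidual module $(\ker\pi)^{**}$ for a \emph{virtual} inner-izer.

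For (1) I would feed $D$ into the hypothesis. Uniform approximate innerness means $\|D-ad_{u_n'}\|\to0$ in the operator norm of $B(\As,\ker\pi)$ for a sequence $(u_n')\subset\ker\pi$ (uniform convergence of a net lets one extract such a sequence). Writing $u_n=e\otimes e-u_n'$ gives $\pi(u_n)=e$ exactly and $\sup_{\|a\|\le1}\|au_n-u_na\|=\|D-ad_{u_n'}\|=:\ep_n\to0$; thus $(u_n)$ is a \emph{uniform approximate diagonal} whose commutator defect tends to zero in norm. The target is a single $u\in M$ with $\pi(u)=e$ and $au-ua=0$. I would phrase this as showing that $(0,e)$ lies in the range, and not merely the closure of the range, of the bounded operator $M\to B(\As,M)\oplus\As$, $u\mapsto(ad_u,\pi(u))$: the data above say $(0,e)$ is a norm limit of points $(ad_{u_n},e)$ in the range, so it suffices to prove this range is closed. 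I would attempt this via a lower-bound/open-mapping argument, exploiting precisely that the defect converges in operator norm (the feature absent in the merely approximately contractible case).

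For (2) I would run the same reduction in the dual module $(\ker\pi)^{**}$, to which uniform \emph{approximate amenability} applies, obtaining $U_n\in M^{**}$ with $\pi^{**}(U_n)=e$ and $\|ad_{U_n}\|\to0$. Here the goal is weaker---a virtual diagonal, i.e.\ one $M_0\in M^{**}$ with $a\cdot M_0=M_0\cdot a$ and $\pi^{**}(M_0)=e$---so I would seek $M_0$ as a weak$^*$ cluster point of $(U_n)$, using weak$^*$ compactness of bounded sets in the bidual together with the vanishing of the commutator defect to kill the cross terms. Alternatively one can invoke the criterion of \cite{GOU} and reduce amenability to producing, for every derivation, a \emph{bounded} net converging in the strong operator topology.

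The main obstacle in both parts is that the approximating elements need not be bounded: convergence of $ad_{u_n}$ (resp.\ $ad_{U_n}$) in operator norm gives no control on $\|u_n\|$ (resp.\ $\|U_n\|$), so neither the open-mapping argument of (1) nor the weak$^*$-compactness argument of (2) is available for free. The heart of the proof is therefore to convert operator-norm convergence of the commutator defects into either closedness of the relevant range (part (1)) or a uniform norm bound enabling weak$^*$ compactness (part (2)); this is exactly the point at which uniformity---as opposed to plain approximate contractibility or amenability---is indispensable, and where the arguments of \cite{GL} and \cite{Pir_approj,GLZ} do their real work.
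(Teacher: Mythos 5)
The paper itself gives no proof of this theorem (it is a survey; the statement is quoted from \cite{GL}, \cite{Pir_approj}, \cite{GLZ}), so your attempt must be judged against the arguments in those references, and against that standard it has a genuine gap: it is a reduction plus an explicit admission that the decisive step is missing. Your reduction to the canonical derivation $D(a)=a\otimes e-e\otimes a$ into $\ker\pi$ is correct and standard, but both of the mechanisms you then propose are the wrong ones, and neither is carried out. For (1), there is no reason the range of $u\mapsto(ad_u,\pi(u))$ should be closed, and proving closedness is at least as hard as the theorem itself; the actual mechanism is a perturbation (Neumann series) argument: given one $u$ with $\pi(u)=e$ and $\|ad_u\|<1$, consider the bimodule morphism $\Psi_u$ on $\As\hot\As$ induced by $a\otimes b\mapsto aub$. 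Since every $w\in\ker\pi$ is a convergent sum $\sum_i a_iD(b_i)$ with $\sum_i\|a_i\|\,\|b_i\|$ arbitrarily close to $\|w\|$, and $\Psi_u(D(b))=ad_u(b)$, one gets $\|\Psi_u|_{\ker\pi}\|\le\|ad_u\|<1$; hence $I-\Psi_u$ is invertible on $\ker\pi$, and $P=\bigl((I-\Psi_u)|_{\ker\pi}\bigr)^{-1}(I-\Psi_u)$ is a bounded bimodule projection of $\As\hot\As$ onto $\ker\pi$, so $(I-P)(e\otimes e)$ is an exact diagonal. That inversion of a small perturbation of the identity --- not closedness of a range --- is where uniformity is spent.

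For (2) your plan fails at exactly the point you concede: nothing bounds $\|U_n\|$, so weak* compactness is unavailable, and no device is offered to restore it; note also that the paper stresses that (2) was left open in \cite{GL} and that the proofs in \cite{Pir_approj} and \cite{GLZ} are ``quite different,'' so it is certainly not ``the same reduction run in the bidual.'' The missing idea in the \cite{GLZ}-style argument is to apply uniform approximate amenability to an $\ell^\infty$-direct sum of derivations: if for each $n$ there were a norm-one derivation $D_n$ into a dual module with no $x$ of norm $\le n$ satisfying $\|D_n-ad_x\|\le 1/2$, then the single derivation $\oplus_n D_n$ into $\bigl(\oplus_{\ell^1}X_n\bigr)^*=\oplus_{\ell^\infty}X_n^*$ would contradict uniform approximate innerness, because any element of that dual module is by definition a uniformly bounded sequence. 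This yields a universal $K$ such that every norm-one derivation into a dual module is $1/2$-approximated in operator norm by some $ad_x$ with $\|x\|\le K$; iterating on the rescaled defects $2(D-ad_{x_1})$, $4(D-ad_{x_1}-\tfrac12 ad_{x_2}),\dots$ produces witnesses of norm $\le 2K$ with defect $2^{-n}$, and only then does your weak* cluster-point argument finish. (Pirkovskii's route is instead homological, via approximate projectivity/injectivity plus the same small-perturbation principle.) Without one of these two devices, your proposal establishes only the easy, already-known reformulations, and the theorem remains unproved.
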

In fact, it was left open in \cite{GL} whether or not the second assert of the above theorem was true. The proofs given in \cite{Pir_approj} and \cite{GLZ} are quite different. Regarding the implication chain (\ref{relation 1}), we point out first that, so far, all known approximately amenable Banach algebras are boundedly approximately contractible. The following theorem clarifies some equivalences.

\begin{thm}[\cite{GLZ}]\label{ac is aa}
For a Banach algebra $\A$ the following are equivalent
\begin{enumerate}
\item $\A$ is approximately contractible;
\item $\A$ is approximately amenable;
\item $\A$ is weakly approximately contractible;
\item $\A$ is weak* approximately amenable.
\end{enumerate}
\end{thm}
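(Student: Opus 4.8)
The plan is to prove the cycle of implications
\[
(1)\Rightarrow(2)\Rightarrow(4)\Rightarrow(3)\Rightarrow(1),
\]
so that all four notions coincide. Three of these arrows are essentially formal. Since every dual module $X^*$ is in particular a Banach $\A$-bimodule, and since norm convergence is stronger than both weak and weak* convergence, one gets $(1)\Rightarrow(2)$ (restrict attention from all modules to dual modules) and $(2)\Rightarrow(4)$ (weaken norm convergence to weak* convergence) at once; the same observations also give $(1)\Rightarrow(3)$ and $(3)\Rightarrow(4)$ should one want them. The substance of the theorem is therefore concentrated in the two arrows $(3)\Rightarrow(1)$ and $(4)\Rightarrow(3)$.

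For $(3)\Rightarrow(1)$ I would isolate a soft convexity lemma. Fix a Banach $\A$-bimodule $Y$ and regard derivations as points of $\B(\A,Y)$; the inner derivations $\{ad_y:y\in Y\}$ then form a \emph{linear subspace}. Equip $\B(\A,Y)$ with the topology $\tau_n$ of pointwise norm convergence and the topology $\tau_w$ of pointwise weak convergence. The point is that $\tau_n$ and $\tau_w$ have the same continuous dual: a $\tau_n$-continuous functional is dominated by $T\mapsto\max_k\|Ta_k\|$ and so has the form $T\mapsto\sum_k\la Ta_k,f_k\ra$ with $a_k\in\A$, $f_k\in Y^*$, and these are precisely the $\tau_w$-continuous functionals. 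By the Hahn--Banach separation theorem, two locally convex topologies with a common dual have the same closed convex sets; applied to the subspace of inner derivations this yields equal $\tau_w$- and $\tau_n$-closures. Hence a weakly approximately inner derivation is automatically norm approximately inner, which is exactly $(3)\Rightarrow(1)$ (the same lemma shows, for a fixed dual module, that weak and norm approximate innerness coincide).

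The heart of the proof is $(4)\Rightarrow(3)$, where one must convert weak* data in the bidual into weak data in the module itself. Given a derivation $D\colon\A\to X$ into an arbitrary bimodule, I would compose with the canonical bimodule embedding $X\hookrightarrow X^{**}$ and view $D$ as a derivation into the dual module $X^{**}$. Weak* approximate amenability then produces a net $(\Phi_i)\subset X^{**}$ with $a\Phi_i-\Phi_i a\to D(a)$ in $\sigma(X^{**},X^*)$ for every $a$. Since left and right multiplication by $a$ on $X^{**}$ are adjoints of the corresponding operations on $X^*$, they are $\sigma(X^{**},X^*)$-continuous; so, approximating each $\Phi_i$ weak* by elements of $X$ via Goldstine's theorem, the inner derivations $\{ad_x:x\in X\}$ are dense in $\{ad_\Phi:\Phi\in X^{**}\}$ for the topology of pointwise $\sigma(X^{**},X^*)$-convergence. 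Consequently $D$ lies in the pointwise-$\sigma(X^{**},X^*)$ closure of $\{ad_x:x\in X\}$, giving a single net $(x_k)\subset X$ with $ad_{x_k}(a)\to D(a)$ in $\sigma(X^{**},X^*)$ for each $a$.

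The final and most delicate point is that this convergence already takes place in the weak topology of $X$: each $ad_{x_k}(a)=ax_k-x_ka$ lies in $X$ and the limit $D(a)$ lies in $X$, while $\sigma(X^{**},X^*)$ restricted to $X$ is exactly $\sigma(X,X^*)$. Hence $ad_{x_k}(a)\to D(a)$ weakly in $X$, i.e. $D$ is weakly approximately inner, which is $(4)\Rightarrow(3)$. I expect the main obstacle to be precisely this passage back from the bidual to $X$: one must keep the approximants inside the submodule $X$ (through Goldstine together with the weak*-continuity of the module actions) and check that the single net can be arranged to work simultaneously for all $a$, the latter being the usual iterated-limit/subnet bookkeeping in the product topology.
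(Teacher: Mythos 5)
Your proposal is correct and is essentially the paper's own route: the paper (which quotes the theorem from \cite{GLZ} and instead proves a cohomological generalization of it) likewise passes through the bidual $X^{**}$, invokes a lemma to replace the approximating net valued in $X^{**}$ by one valued in $X$ converging in the weak (operator) topology, and finishes with the convexity fact \cite[VI.1.5]{DS} that weak- and strong-operator closures of a convex set of operators coincide --- exactly your $(3)\Rightarrow(1)$ step. Your Goldstine/weak*-continuity argument for $(4)\Rightarrow(3)$ is in substance a proof of the $n=1$ case of the paper's Lemma~\ref{lemma}, whose proof the paper omits as ``tedious.''
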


We may discuss further the equivalence in the cohomology setting.
Let $\A$ be a Banach algebra and $X$ be a Banach $\A$ bimodule. For each integer $n\geq 1$, we denote by $\Ll^n(\A,X)$ the linear space of all bounded $n$-linear functionals from $\A^n$ into $X$. The space $\Ll^n(\A,X)$ may be equipped with various topologies. For example, we may consider the operator uniform norm topology, the strong operator topology or the weak operator topology on $\Ll^n(\A,X)$, and we will denote the result topological vector spaces, respectively, by $\Ll^n_u(\A,X)$, $\Ll^n_s(\A,X)$ and $\Ll^n_w(\A,X)$. Consider the complex
\[  0\overset{\de^0}{\to} X \overset{\de^1}{\to} \Ll^1(\A,X)\overset{\de^2}{\to} \Ll^2(\A,X)\overset{\de^3}{\to}\ldots \overset{\de^n}{\to} \Ll^n(\A,X) \ldots,  \]
where $\de^n$: $\Ll^{n-1}(\A,X)\to \Ll^n(\A,X)$ is the linear mapping (see \cite{Johns_M} or \cite{DAL} for details) defined by
\begin{align*}
 \de^nT\,(a_1,a_2,\ldots,a_n) =& a_1T(a_2,\ldots,a_n)+ \sum_{i=1}^{n-1}{(-1)^iT(a_1,\ldots,a_ia_{i+1},\ldots, a_n)}\\
                               & + (-1)^nT(a_1,\ldots,a_{n-1}) a_n \quad (a_i\in \A, \; i=1, 2, \ldots, n).  
\end{align*}
We will use $\de^n_X$ instead of $\de^n$ if we need to highlight that the ground module in the complex is $X$. Clearly $\de^n$ is continuous if we equip all $\Ll^n(\A,X)$ ($n=1, 2, \ldots$) with the $u$-, $s$-, or $w$-topology. So $\ker(\de^{n+1})$ is closed in $\Ll^n(\A,X)$ in each of these topologies. Hence $cl(\text{Im}\,\de^n)\subset \ker(\de^{n+1})$, where $cl$ denotes the closure in any of the above topologies. Let us focus on the strong operator topology case, and denote the closure in this topology by $cl_s$. Define $H_s^n(\A,X)= \ker(\de^{n+1})/cl_s(\text{Im}\,\de^n)$. Clearly, to say $\A$ being approximately amenable is to say $H_s^1(\A,X^*)=\{0\}$ for each Banach $\A$-bimodule $X$, and to say $\A$ being approximately contractible is to say $H_s^1(\A,X)=\{0\}$ for each such $X$. The proof of the following lemma is tedious and hence is omitted.

\begin{lemma}\label{lemma}
Let $T\in \Ll^n_s(\A,X)$ ($n\geq 1)$. Suppose that $(\Phi_\al)\subset \Ll^{n-1}_s(\A,X^{**})$ such that $\lim_\al \de^n\Phi_\al = T$ in the strong operator topology. Then there is a net $(\Psi_\be)\subset \Ll^{n-1}_s(\A,X)$ such that $\lim_\be \de^n\Psi_\be = T$ in the weak operator topology.
\end{lemma}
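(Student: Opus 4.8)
The plan is to realize $T$ as a weak-operator limit of coboundaries $\de^n\Psi_\be$ by a direct net construction, indexed by the data that govern weak-operator convergence. Concretely, I would direct the index set $\be=(F,G,\ep)$ by finite subsets $F\subset\A^n$, finite subsets $G\subset X^*$, and $\ep>0$ (inclusion, and decreasing $\ep$), and for each $\be$ produce a single $\Psi_\be\in\Ll^{n-1}_s(\A,X)$ with $|\la\de^n\Psi_\be(a)-T(a),f\ra|<\ep$ for all $a\in F$ and $f\in G$; assembling these yields a net with $\de^n\Psi_\be\to T$ in the weak operator topology. Since $\lim_\al\de^n\Phi_\al=T$ in the strong operator topology, for a fixed $\be$ I may first choose one index $\al_0$ with $\|\de^n\Phi_{\al_0}(a)-T(a)\|$ (norm in $X^{**}$) small enough, for every $a$ in the finite set $F$, that $|\la\de^n\Phi_{\al_0}(a)-T(a),f\ra|<\ep/2$ for all $f\in G$. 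It then remains to replace the $X^{**}$-valued cochain $\Phi_{\al_0}$ by an $X$-valued cochain $\Psi_\be$ whose coboundary is weakly close to that of $\Phi_{\al_0}$ at the points of $F$ against $G$.

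The reduction that makes this possible comes from writing out the pairing. Using the definition of the dual-module actions, $\la a\cdot\xi,f\ra=\la\xi,f\cdot a\ra$ and $\la\xi\cdot a,f\ra=\la\xi,a\cdot f\ra$ for $\xi\in X^{**}$, one obtains
\begin{align*}
\la\de^n\Phi(a_1,\dots,a_n),f\ra
&=\la\Phi(a_2,\dots,a_n),f\cdot a_1\ra
+\sum_{i=1}^{n-1}(-1)^i\la\Phi(a_1,\dots,a_ia_{i+1},\dots,a_n),f\ra\\
&\quad+(-1)^n\la\Phi(a_1,\dots,a_{n-1}),a_n\cdot f\ra.
\end{align*}
Thus $\la\de^n\Phi(a),f\ra$ is a fixed $\pm1$-combination of terms $\la\Phi(b),g\ra$ with $b$ ranging over a finite set $F'\subset\A^{n-1}$ (the arguments appearing above, as $a$ runs over $F$) and $g$ over a finite set $G'\subset X^*$ (namely $G$ together with all $f\cdot a_1$ and $a_n\cdot f$). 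The identical identity holds for an $X$-valued $\Psi$, with the same coefficients and the same $b,g$, because $X\hookrightarrow X^{**}$ is a bimodule map. Hence it suffices to build a bounded $(n-1)$-linear map $\Psi\colon\A^{n-1}\to X$ with $|\la\Psi(b)-\Phi_{\al_0}(b),g\ra|$ uniformly small over the finite sets $F'$ and $G'$; combined with the $\ep/2$ from the coboundary estimate this gives the desired bound.

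To construct $\Psi$ I would localize to a finite-dimensional subspace. Let $E\subset\A$ be the span of all coordinates of the finitely many points of $F'$, fix a basis $e_1,\dots,e_d$ of $E$, and use weak*-density of $X$ in $X^{**}$ (Goldstine) to pick, for each basis tuple $(e_{k_1},\dots,e_{k_{n-1}})$, an element $y_{k_1\dots k_{n-1}}\in X$ with $|\la y_{k_1\dots k_{n-1}}-\Phi_{\al_0}(e_{k_1},\dots,e_{k_{n-1}}),g\ra|<\rho$ for every $g\in G'$, where $\rho$ is chosen small relative to the finitely many, bounded basis coefficients of the points of $F'$. Defining $\Psi_E\colon E^{n-1}\to X$ to take these values on basis tuples and extending multilinearly, the expansion of $\Psi_E(b)$ and $\Phi_{\al_0}(b)$ in the basis shows $|\la\Psi_E(b)-\Phi_{\al_0}(b),g\ra|<\ep/2$ for all $b\in F'$, $g\in G'$. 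Finally I extend $\Psi_E$ to all of $\A^{n-1}$ by $\Psi_\be(a_1,\dots,a_{n-1})=\Psi_E(Pa_1,\dots,Pa_{n-1})$, where $P\colon\A\to E$ is a bounded projection, available since $E$ is finite-dimensional; as $P$ fixes $E$, the map $\Psi_\be$ agrees with $\Psi_E$ on $F'$ and is a bounded $X$-valued $(n-1)$-linear map.

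The conceptual heart, and the one step I expect to require care, is exactly this coherence issue: Goldstine yields weak*-approximation of individual vectors of $X^{**}$ by vectors of $X$, whereas I need these approximations to fit together into a genuine multilinear map. Passing to the finite-dimensional $E$ with a fixed basis and then using complementation of $E$ to push the map back out to $\A$ is what reconciles the pointwise approximations with multilinearity and boundedness; note that no uniform bound on the net $(\Psi_\be)$ is needed or claimed. The remaining verifications—that the index set is directed, that the two error terms combine to less than $\ep$ (absorbing the harmless factor $\max_{f\in G}\|f\|$), and the bookkeeping of the finitely many basis coefficients—are straightforward but laborious, which is presumably why the authors describe the argument as tedious and omit it.
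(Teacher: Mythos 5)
The paper offers no proof to compare against---the authors explicitly declare the argument ``tedious'' and omit it---so your proposal stands on its own, and it is correct. Both of your reductions are sound: the weak operator topology only tests finitely many pairings $\la\de^n\Psi(a),f\ra$ at a time, and by the dual-module identities $\la a\cdot\xi,f\ra=\la\xi,f\cdot a\ra$ and $\la\xi\cdot a,f\ra=\la\xi,a\cdot f\ra$ (valid for $\xi\in X^{**}$, and for elements of $X$ because the canonical embedding $X\hookrightarrow X^{**}$ is a bimodule morphism) each such pairing is a fixed signed sum of at most $n+1$ terms of the form $\la\Psi(b),g\ra$, resp.\ $\la\Phi_{\al_0}(b),g\ra$, with $(b,g)$ ranging over finite sets $F'\times G'$ determined by $(F,G)$. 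Your second step contains the real content: since weak operator convergence requires no uniform bound on the net $(\Psi_\be)$, plain weak*-density of $X$ in $X^{**}$ (Goldstine) applied to the finitely many basis tuples of $E=\operatorname{span}\{\text{coordinates of points of } F'\}$, followed by multilinear extension from the basis and composition with a bounded projection $P\colon\A\to E$ (which exists because $E$ is finite-dimensional), produces a genuine element of $\Ll^{n-1}(\A,X)$ whose relevant pairings are as close as desired to those of $\Phi_{\al_0}$; this cleanly sidesteps the principle of local reflexivity, which would be needed if one insisted on norm control of the approximants. Two points should be spelled out in a full write-up: the degenerate case $n=1$, where $\Ll^{0}(\A,X)=X$, $F'$ consists of the empty tuple, and $\Psi_\be$ is simply a vector of $X$ chosen weak*-close to $\Phi_{\al_0}\in X^{**}$ against $G'$; and the quantitative choice of $\rho$ (a factor $1/(2(n+1))$, the norms $\max_{f\in G}\|f\|$, and the basis-coefficient bound $d^{\,n-1}\max|c_{jk}|^{n-1}$)---both routine, as you say.
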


With this lemma we can have the following extension of Theorem~\ref{ac is aa} in the cohomology setting.
\begin{thm}
Let $\A$ be a Banach algebra and $n\geq 1$. If $H_s^n(\A,X^*)=\{0\}$ for each Banach $\A$-bimodule $X$, then $H_s^n(\A,X)=\{0\}$ for each such $X$.
\end{thm}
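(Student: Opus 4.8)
The plan is to reduce the problem to the dual module $X^{*}$, to which the hypothesis applies, and then to transport the resulting approximation back down to $X$: first recovering weak operator convergence via Lemma~\ref{lemma}, and finally upgrading it to strong operator convergence by a convexity argument. This mirrors the scheme behind the equivalence in Theorem~\ref{ac is aa}, now carried out one degree at a time in the complex.

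First I would fix a cocycle, i.e.\ a $T\in\Ll^n_s(\A,X)$ with $\de^{n+1}T=0$, and compose it with the canonical isometric $\A$-bimodule embedding $\kappa\colon X\to X^{**}$. Since $\kappa$ commutes with the coboundary maps, $\kappa\circ T$ is a cocycle valued in $X^{**}=(X^{*})^{*}$, a \emph{dual} module. Applying the hypothesis to the Banach $\A$-bimodule $X^{*}$ yields $H_s^n(\A,X^{**})=\{0\}$, so $\kappa\circ T\in cl_s(\mathrm{Im}\,\de^n_{X^{**}})$; that is, there is a net $(\Phi_\al)\subset\Ll^{n-1}_s(\A,X^{**})$ with $\de^n\Phi_\al\to T$ in the strong operator topology. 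This is precisely the input required by Lemma~\ref{lemma}.

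Next I would feed this net into Lemma~\ref{lemma}, which returns a net $(\Psi_\be)\subset\Ll^{n-1}_s(\A,X)$ with $\de^n\Psi_\be\to T$ in the weak operator topology. At this point $T$ lies in the weak operator topology closure of the linear subspace $\mathrm{Im}\,\de^n_X$ of $\Ll^n(\A,X)$, and it remains to promote this to membership in $cl_s(\mathrm{Im}\,\de^n_X)$.

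The last step is the crux. I would observe that the strong and weak operator topologies on $\Ll^n(\A,X)$, being the topologies of pointwise norm and pointwise weak convergence, are both inherited from the product $\prod X$ and share the same continuous dual: a functional continuous for either topology is a finite sum $T\mapsto\sum_i f_i\bigl(T(a_1^{(i)},\dots,a_n^{(i)})\bigr)$ with $f_i\in X^{*}$. Since $\mathrm{Im}\,\de^n_X$ is convex, the Hahn--Banach separation theorem then forces its closures in the two topologies to coincide, so $T\in cl_w(\mathrm{Im}\,\de^n_X)=cl_s(\mathrm{Im}\,\de^n_X)$, and therefore $H_s^n(\A,X)=\{0\}$. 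The genuine labour of the argument is absorbed into Lemma~\ref{lemma}; within this proof the only point demanding care is the verification that the two operator topologies have identical duals, after which the weak-to-strong upgrade is automatic.
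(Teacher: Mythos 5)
Your proof is correct and follows essentially the same route as the paper: embed $X$ in $X^{**}$, apply the hypothesis to $X^{*}$ to get a net converging to $T$ in the strong operator topology of $\Ll^n_s(\A,X^{**})$, use Lemma~\ref{lemma} to bring the approximating net down to $X$ with weak operator convergence, and conclude from the coincidence of the weak and strong operator closures of the convex set $\mathrm{Im}\,\de^n_X$. The only difference is that the paper cites \cite[VI.1.5]{DS} for this last coincidence, whereas you reprove it via the equality of the two duals and Hahn--Banach separation, which is precisely the argument underlying the cited result.
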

\begin{proof}
If $H_s^n(\A,X^*)=\{0\}$ for each Banach $\A$-bimodule $X$, then $H_s^n(\A,X^{**})=\{0\}$ for each Banach $\A$-bimodule $X$. Hence for any $T\in \ker(\de_X^{n+1})\subset \ker(\de_{X^{**}}^{n+1})$, there is a net $(\Phi_\al)\subset \Ll_s^{n-1}(\A,X^{**})$ such that $\lim_\al \de^n\Phi_\al = T$ in the strong operator topology. Apply Lemma~\ref{lemma}. We obtain a net $(\Psi_\be)\subset \Ll^{n-1}_s(\A,X)$ such that $\lim_\be \de^n\Psi_\be = T$ in the weak operator topology. Therefore $\ker(\de_X^{n+1})\subset cl_w(\text{Im}\,(\de_X^n))$. From \cite[VI.1.5]{DS}, we have $cl_w(\text{Im}\,(\de_X^n)) = cl_s(\text{Im}\,(\de_X^n))$. So we have shown $\ker(\de_X^{n+1})\subset cl_s(\text{Im}\,(\de_X^n))$. Thus $H_s^n(\A,X)=\{0\}$.
\end{proof}

After Theorems~\ref{ua is a} and \ref{ac is aa}, if we combine equivalent notions in the implication chains (\ref{relation 1}) and (\ref{relation 2}), then the chains are reduced simply to the following chain.
\begin{equation}\label{reduced}
\text{contractible} \Rightarrow\begin{Bmatrix} \text{amenable.}\\
                                      \text{seq. a. c.}\end{Bmatrix}\Rightarrow \begin{Bmatrix} \text{seq. a. a.}\\
                                                                                \text{bdd. a. c.}\end{Bmatrix}\Rightarrow\text{bdd. a. a.}\Rightarrow\text{a. a.} 
\end{equation}
Some partial converses of this reduced chain are true.

\begin{thm}[\cite{GLZ}]
Let $\A$ be a separable Banach algebra. Then $\A$ is sequentially approximately amenable (resp. sequentially approximately contractible) if it is boundedly approximately amenable (resp. boundedly approximately contractible).
\end{thm}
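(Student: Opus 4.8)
The plan is to prove the amenable version; the contractible version is identical word-for-word with $X^*$ replaced by $X$ throughout. The whole point is to convert a \emph{bounded} approximating net into a \emph{sequence}, and separability together with the uniform bound are exactly what make this possible.

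First I would fix a continuous derivation $D\colon \A\to X^*$ and invoke bounded approximate amenability to obtain a net $(x_i)\subset X^*$ with $D(a)=\lim_i (ax_i-x_ia)$ for every $a\in\A$, together with a constant $K$ satisfying $\|ax_i-x_ia\|\le K\|a\|$ for all $i$ and all $a$. Next I would use separability to fix a countable dense subset $\{a_1,a_2,\dots\}$ of $\A$. The key selection step is a cofinal/diagonal choice: for each $m$, since the net $(ax_i-x_ia)$ converges to $D(a_k)$ for each of the finitely many indices $k=1,\dots,m$, directedness of the index set lets me pick a single $i_m$ with $\|(a_kx_{i_m}-x_{i_m}a_k)-D(a_k)\|<1/m$ simultaneously for $k=1,\dots,m$. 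This produces a genuine sequence $(x_{i_m})_{m\ge1}$, which is moreover still bounded by $K$.

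It then remains to verify that this sequence approximates $D$ at every point, not merely on the dense set. Fix $a\in\A$ and $\ep>0$, choose $a_k$ with $\|a-a_k\|<\ep$, and for $m\ge k$ estimate $\|(ax_{i_m}-x_{i_m}a)-D(a)\|$ by splitting it through $a_k$ into three pieces: the term $\|D(a)-D(a_k)\|\le\|D\|\,\ep$, the term $\|(a_kx_{i_m}-x_{i_m}a_k)-D(a_k)\|<1/m$, and the term $\|(a-a_k)x_{i_m}-x_{i_m}(a-a_k)\|\le K\,\ep$. Summing gives a bound of $(\|D\|+K)\ep+1/m$, which tends to $0$; hence $(ax_{i_m}-x_{i_m}a)\to D(a)$ for every $a\in\A$, so $D$ is sequentially approximately inner and $\A$ is sequentially approximately amenable.

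I expect the only real obstacle to be the middle, boundedness-dependent term: in the three-$\ep$ splitting the quantity $\|(a-a_k)x_{i_m}-x_{i_m}(a-a_k)\|$ must be controlled uniformly in $m$, and this is precisely where the uniform bound $K$ on the inner derivations is indispensable. Without it (i.e. for mere approximate amenability) a small perturbation $a-a_k$ could be magnified without limit along the sequence, and the passage from a dense set to all of $\A$ would collapse; this is exactly why the hypothesis must be bounded approximate amenability rather than plain approximate amenability.
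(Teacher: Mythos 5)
Your proof is correct: the decomposition through $a_k$ is exact, the uniform bound $K$ on the inner derivations $ad_{x_i}$ is used precisely where it is needed, and the cofinal selection over the directed index set is valid. The survey itself gives no proof (it cites \cite{GLZ}), but your argument --- extracting a sequence from the bounded net via a countable dense set and a three-term estimate --- is essentially the argument of \cite{GLZ}, so there is nothing to add.
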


One cannot expect the above result holds without the condition of separability of $\A$. In fact, any amenable Banach algebra without sequential approximate identity is boundedly approximately contractible but not sequentially approximately contractible. Some convolution semigroup algebras are boundedly approximately amenable but not sequentially approximately amenable \cite{CGZ}. We also note that some Feinstein algebras are sequentially approximately contractible but not amenable \cite{GLZ}.
There are two major open questions regarding the converses of the chain (\ref{reduced}).
\begin{question}
 Is there an approximately amenable Banach algebra which is not boundedly approximately amenable?
\end{question}
\begin{question}
 Is there a boundedly approximately amenable Banach algebra which is not boundedly approximately contractible?

\end{question}

It is known that there are pseudo-amenable and even pseudo-contractible Banach algebras which are not approximately amenable. Here are some relations between ``pseudo'' and ``approximate''.

\begin{thm}\label{aa pa} Let $\A$ be a Banach algebra. Then
\begin{enumerate}
\item $\A$ is approximately amenable if and only if $\A^\sharp$ is pseudo-amenable \cite{GZ};
\item $\A$ is boundedly (resp. sequentially) approximately contractible if and only if $\A^\sharp$ is boundedly (resp. sequentially) pseudo-amenable \cite{CGZ};
\item $\A^\sharp$ is pseudo-contractible if and only if $\A$ is contractible \cite{GZ}. (In particular, if $\A$ has a unit, then it is already contractible if it is pseudo-contractible.)
\end{enumerate}
\end{thm}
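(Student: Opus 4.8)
The plan is to reduce every equivalence to a statement about the \emph{unital} algebra $\A^\sharp$, exploiting Theorem~\ref{ac is aa} together with the invariance of each mode of approximate amenability/contractibility under unitization. Since $\A$ is (bdd./seq.) approximately contractible precisely when $\A^\sharp$ is, and since approximate amenability coincides with approximate contractibility, parts (1) and (2) both amount to the single unital assertion: \emph{a unital Banach algebra $B$ (with unit $e$) is (bdd./seq.) approximately contractible if and only if it is (bdd./seq.) pseudo-amenable}. I would prove this with two standard devices. For the forward implication, apply approximate contractibility to the canonical derivation $D_0\colon B\to\ker\pi$, $D_0(b)=b\otimes e-e\otimes b$ (which lands in $\ker\pi$ because $B$ is unital); if $(n_i)$ implements $D_0$ approximately, then $u_i:=e\otimes e-n_i$ satisfies $bu_i-u_ib=D_0(b)-(bn_i-n_ib)\to0$ and $\pi(u_i)=e$, so $(u_i)$ is an approximate diagonal.

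For the reverse implication, given an approximate diagonal $(u_i)$ and any derivation $D\colon B\to X$, let $\tilde D\colon B\hot B\to X$ be the bounded linear map determined by $\tilde D(a\otimes b)=a\cdot D(b)$ and set $\xi_i=\tilde D(u_i)$. A direct computation using the derivation identity gives $b\xi_i-\xi_ib=\tilde D(bu_i-u_ib)+\pi(u_i)\cdot D(b)$, so that $b\xi_i-\xi_ib\to e\cdot D(b)$. After the routine reduction to modules on which $e$ acts as the identity, where $e\cdot D(b)=D(b)$, this shows $D$ is approximately inner. For part (2) the same formula yields $\|b\xi_i-\xi_ib\|\le(\|\tilde D\|+\|D\|)K\|b\|$ from the defining bound $\|bu_i-u_ib\|\le K\|b\|$ together with $\sup_i\|\pi(u_i)\|<\infty$, giving \emph{bounded} approximate innerness; replacing nets by sequences throughout handles the sequential case.

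For part (3) the easy direction ($\Leftarrow$) is immediate: a contractible algebra is unital, so writing $q=e-1_\A$ one has $\A^\sharp\cong\A\times\C q$, whence $\A^\sharp$ is contractible and the constant net built from its diagonal is a central approximate diagonal. The substantive direction is ($\Rightarrow$). First I would prove the lemma that a \emph{unital} pseudo-contractible $B$ is contractible: if $(u_i)$ is a central approximate diagonal then $\pi(u_i)\to e$, so some $c:=\pi(u_{i_0})$ is invertible; since $u_{i_0}$ is central, $c$ and hence $c^{-1}$ are central in $B$, and $w:=c^{-1}\cdot u_{i_0}$ is then a genuine diagonal (one checks $\pi(w)=e$ and $w$ central), so $B$ is contractible. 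This settles the parenthetical remark and shows $\A^\sharp$ is contractible. To descend to $\A$, decompose a central approximate diagonal as $u_i=v_i+r_i\otimes e+e\otimes s_i+\lambda_i\,e\otimes e$ along $\A^\sharp\hot\A^\sharp=(\A\hot\A)\oplus(\A\hot\C e)\oplus(\C e\hot\A)\oplus(\C e\hot\C e)$; matching the $\A\hot\C e$ and $\C e\hot\A$ components of $bu_i=u_ib$ forces $br_i=-\lambda_i b$ and $s_ib=-\lambda_i b$ for $b\in\A$, while $\pi(u_i)\to e$ forces $\lambda_i\to1$. For large $i$ this makes $-r_i/\lambda_i$ a right and $-s_i/\lambda_i$ a left identity of $\A$, hence $\A$ is unital; then $\A=p\A^\sharp$ for the central idempotent $p=1_\A$, so $\A$ is the image of the surjective homomorphism $x\mapsto px$ on the contractible algebra $\A^\sharp$ and is therefore contractible.

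The genuinely nontrivial content sits in part (3): the decomposition computation that manufactures a two-sided identity for $\A$ out of mere centrality of the $u_i$ together with $\lambda_i\to1$. For parts (1) and (2) the algebra is routine, and the only delicate points are the reduction to modules on which $e$ acts as the identity (needed so that the limit $e\cdot D(b)$ equals $D(b)$) and, in (2), the uniform bookkeeping of the operator bounds on $(ad_{\xi_i})$.
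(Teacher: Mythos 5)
Your proof is correct, but note that the paper itself gives no proof of this theorem; it is quoted from \cite{GZ} and \cite{CGZ}, so there is no internal argument to compare against, and your write-up is best measured against those references. Your route genuinely differs from the original one in part (1): \cite{GZ} works with approximate amenability directly, which forces the canonical derivation into a bidual module and a convexity/weak-to-norm passage to return to norm limits, whereas you invoke Theorem~\ref{ac is aa} (approximate amenability $=$ approximate contractibility) plus the unitization invariance the paper records after introducing approximate amenability, so that the derivation $D_0(b)=b\otimes e-e\otimes b$ can be taken into $\ker\pi$ itself and all limits stay in norm. This is cleaner, at the cost of using the nontrivial equivalence from \cite{GLZ} as a black box. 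Importantly, your part (2) correctly avoids that theorem (bounded/sequential approximate amenability and contractibility are \emph{not} known to coincide --- that is essentially Question~2 of the paper) and works only with bounded/sequential approximate contractibility, which is exactly what the statement requires. Your component-matching computation in part (3) (forcing $br_i=-\lambda_i b$, $s_ib=-\lambda_i b$, $\lambda_i\to 1$, hence a two-sided unit for $\A$) is the same decomposition device the paper itself uses in proving Theorem~\ref{aa psa}(2), so it is very much in the paper's spirit. Two points you should make explicit, though neither is a gap: the convergence $\pi(u_i)\cdot D(b)\to e\cdot D(b)$ needs $\pi(u_i)\to e$ in norm, which holds precisely because $B$ is unital (evaluate $\pi(u_i)a\to a$ at $a=e$); and in the bounded case the bound $\sup_i\|\pi(u_i)\|\le K\|e\|$ comes from the same evaluation.
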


In general, $\A$ being pseudo-amenable seems much weaker than $\A^\sharp$ being pseudo-amenable (or $\A$ being approximately amenable). But if $\A$ has a bounded approximate identity, then they are equivalent.

\begin{thm}\label{aa psa} Let $\A$ be a Banach algebra.
\begin{enumerate}
\item If $\A$ has a bounded approximate identity, then it is approximately amenable if and only if it is pseudo-amenable \cite{GZ}.
\item $\A$ is boundedly (resp. sequentially) approximately contractible if and only if it is boundedly (resp. sequentially) pseudo-amenable and has a bounded approximate identity.
\end{enumerate}
\end{thm}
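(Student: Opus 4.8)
The plan is to establish each equivalence by proving its two implications separately, with the \emph{same} two arguments serving (after inserting the quantitative bookkeeping) for both the plain statement of part (1) and the bounded/sequential statement of part (2). On the ``approximate'' side the bounded approximate identity $(e_\lambda)$ is in fact automatic: it is known \cite{GL} that an approximately amenable algebra has a bounded approximate identity, so in the implication $\text{bdd. a. c.}\Rightarrow\text{bdd. ps. a.}+\text{bai}$ the existence of $(e_\lambda)$ is recorded first and the remaining content is to produce the diagonal. One may alternatively route everything through Theorem~\ref{aa pa}, which trades the approximate properties of $\A$ for pseudo-amenability of $\A^\sharp$; I will instead argue directly, which keeps the role of $(e_\lambda)$ transparent.

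For the direction ``pseudo $\Rightarrow$ approximate'' I would work with derivations. Given a derivation $D\colon\A\to X^*$ (resp. $D\colon\A\to X$) and an approximate diagonal $(u_i)\subset\A\hot\A$, write $u_i=\sum_k a_k^{(i)}\otimes b_k^{(i)}$ and set $\Phi(u_i)=\sum_k a_k^{(i)}\cdot D(b_k^{(i)})$. The derivation identity, together with $a u_i-u_i a\to 0$, yields
\[ a\cdot\Phi(u_i)-\Phi(u_i)\cdot a-\pi(u_i)\cdot D(a)\longrightarrow 0, \]
so that $ad_{\Phi(u_i)}(a)\to D(a)$ as soon as $\pi(u_i)\cdot D(a)\to D(a)$. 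This last convergence is exactly where $(e_\lambda)$ is needed: a standard reduction (legitimate precisely because $\A$ has a bounded approximate identity) lets me assume $X$ neo-unital, and then, after a Cohen factorization, $\pi(u_i)$ acts as a \emph{bounded} family of operators on the essential module and converges strongly to the identity. For part (2) I would carry the bounds $\|au_i-u_ia\|\le K\|a\|$ and $\|\pi(u_i)a\|\le K\|a\|$ through this computation to see that $(ad_{\Phi(u_i)})$ is uniformly bounded, so $D$ is boundedly approximately inner, and keep the index set a sequence in the sequential case.

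For the converse ``approximate $\Rightarrow$ pseudo'' I would build the diagonal from the bounded approximate identity itself. The bounded net $m_\lambda=e_\lambda\otimes e_\lambda$ already satisfies the multiplier condition $\pi(m_\lambda)a=e_\lambda^2 a\to a$; its only defect is asymptotic centrality. I would measure that defect by the bounded derivation $a\mapsto w^*\text{-}\lim_\lambda\bigl(a\,m_\lambda-m_\lambda a\bigr)$ into $(\A\hot\A)^{**}$, invoke approximate amenability to make this derivation approximately inner, and subtract the inner part; a Mazur/convexity argument then converts the resulting weak$^*$ statement in the bidual into a norm statement in $\A\hot\A$, producing an approximate diagonal for $\A$. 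Tracking the uniform bounds (resp. keeping a sequence) throughout this passage upgrades the conclusion to bounded (resp. sequential) pseudo-amenability for part (2).

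The main obstacle, common to both directions, is that the diagonal elements are \emph{not} norm-bounded, so the naive estimates that would move $e_\lambda$ past a diagonal element are unavailable: a small commutator multiplied by an uncontrolled $\|u_i\|$ need not be small. My way around this is to let $(e_\lambda)$ perform all of the ``bounded'' work -- through the neo-unital reduction and Cohen factorization in the first implication, and through the bounded correction net $m_\lambda=e_\lambda\otimes e_\lambda$ together with the weak$^*$-to-norm passage in the second -- so that the unbounded diagonal only ever enters expressions that have already been tamed. Preserving the constant $K$ and the sequential index in these manipulations is exactly what separates part (1) from the bounded and sequential refinements of part (2).
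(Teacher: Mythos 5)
Your proposal has genuine gaps in both directions, and they are concentrated exactly where part (2) differs from part (1). In the direction ``pseudo $\Rightarrow$ approximate'', the reduction to neo-unital modules is a \emph{dual-module} technique: splitting off the degenerate parts of a bimodule uses weak*-compactness of bounded sets in $X^*$, so it is simply not available for part (2), where derivations take values in arbitrary bimodules $X$. Even in the dual case (part (1)) your convergence claim is off: strong convergence of the operators $x\mapsto x\cdot\pi(u_i)$ to the identity on a neo-unital $X$ only gives \emph{weak*} convergence of the adjoints on $X^*$, so your identity $a\cdot\Phi(u_i)-\Phi(u_i)\cdot a-\pi(u_i)\cdot D(a)\to 0$ yields only that $D$ is w*-approximately inner. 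That can be repaired for part (1) by invoking Theorem~\ref{ac is aa} (w*-a.a. $=$ a.a.), but you do not note this, and no such repair exists along your route for part (2), where you need norm convergence together with the constant $K$ or the sequential index. In the converse direction, your passage through $(\A\hot\A)^{**}$ requires a weak* \emph{cluster point} (the limit need not exist), the inner corrections must be arranged to lie in $\ker\pi^{**}$ or they destroy the condition $\pi(u_i)a\to a$, and --- most seriously --- the Goldstine/Mazur step produces convex combinations indexed by nets; these are precisely the operations that lose sequentiality and multiplier bounds, so ``tracking the bounds and keeping a sequence'' is not bookkeeping, it is the unresolved core of the argument. A further slip: you cite \cite{GL} for ``an approximately amenable algebra has a bounded approximate identity''; \cite{GL} gives only separate left and right approximate identities (Theorem~\ref{a.i.}(1)), and whether approximate amenability forces a b.a.i.\ is exactly Question~\ref{tai} of this paper. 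The b.a.i.\ needed in part (2) comes from \emph{bounded/sequential} approximate contractibility via \cite{CGZ} (Theorem~\ref{a.i.}(2)).

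The paper's proof avoids every one of these difficulties by never leaving the space $\A^\sharp\hot\A^\sharp$. For the forward direction it applies sequential approximate contractibility to the single inner derivation $a\mapsto a\otimes e-e\otimes a$, whose values lie in the honest module $\A^\sharp\hot\A^\sharp$ (not a bidual), obtaining a \emph{sequence} $(u_n)$ with $au_n-u_na\to 0$ and $\pi(u_n)=e$; it then writes $u_n=v_n-F_n\otimes e-e\otimes G_n+e\otimes e$, uses the uniform boundedness principle plus Theorem~\ref{a.i.}(2) to see $(F_n)$, $(G_n)$ are bounded, and checks that $U_n=v_n-F_n\otimes G_n$ is a sequential approximate diagonal for $\A$. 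For the converse it builds a diagonal for $\A^\sharp$ directly from one for $\A$ and the b.a.i., via $U_{n,m}=u_n+(e-\pi(u_n))\otimes(e-\pi(u_m))$, and quotes Theorem~\ref{aa pa}(2). Because no bidual, no module reduction and no convexity argument ever enter, sequences stay sequences and bounds stay bounds --- which is the whole point of part (2). If you want to salvage your derivation-theoretic approach, it can be made to work for part (1) (with the w*-to-norm repair via Theorem~\ref{ac is aa}), but for part (2) you should abandon the module reductions and argue through $\A^\sharp\hot\A^\sharp$ as the paper does.
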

\begin{proof}
We prove part (2) for the sequential case. The proof for the other case is given in \cite{GSZ}. If $\A$ is sequentially approximately contractible, then, considering the derivation $a\mapsto a\otimes e-e\otimes a$, we may obtain a sequence $(u_n)\subset \A^\sharp \hot \A^\sharp$ such that $au_n -u_na\to 0$ ($a\in \A$) and $\pi(u_n) = e$. We may write $u_n = v_n - F_n\otimes e -e\otimes G_n + e\otimes e$, where $v_n\in \A\hot \A$, $F_n, G_n \in \A$ and $\pi(v_n) = F_n +G_n$. By the uniform boundedness principle, $(F_n)$ and $(G_n)$ are, respectively, multiplier bounded right approximate identity and multiplier bounded left approximate identity for $\A$. On the other hand, by \cite[Corollary~3.4]{CGZ} (see Theorem~\ref{a.i.}(2) below), $\A$ has a bounded approximate identity $(e_\al)$. This implies that $(F_n)$ and $(G_n)$ are bounded sequences. Let $U_n= v_n - F_n\otimes G_n$. Then it is readily seen that $(U_n)\subset \A\hot\A$ is a sequential approximate diagonal for $\A$. So $\A$ is sequentially pseudo-amenable.

For the converse, assume that $(u_n)\subset \A\hot\A$ is a sequential approximate diagonal for $\A$. Then $(\pi(u_n))$ is a (multiplier bounded) approximate identity for $\A$. It is bounded if $\A$ has a bounded approximate identity. Now define $U_{n,m} = u_n + (e-\pi(u_n))\otimes (e-\pi(u_m))$. Then a subsequence of $(U_{n,m})$ serves a sequential approximate diagonal for $\A^\sharp$. By Theorem~\ref{aa pa}(2), $\A$ is sequentially approximately contractible.
\end{proof}

The existence of a bounded approximate identity in the above theorem cannot be removed. For example, $\ell^1$ is boundedly pseudo-amenable (in fact, it is boundedly pseudo-contractible) but it is not approximately amenable.

\begin{question}
 Does approximate amenability imply pseudo-amenability?
\end{question}

The answer to Question~3 is affirmative if the algebra has a central approximate identity (\cite{GZ}). In particular, it is true if the algebra is abelian. Since every pseudo-amenable Banach algebra has a two-sided approximate identity, Any approximately amenable Banach algebra without a two-sided approximate identity (see Question~4 in Section~ 3) will be a counter-example to this implication conjecture.

Approximate amenability and pseudo-amenability do not imply weak amenability. An example is given in \cite{GL}. But
\begin{thm}[\cite{GZ}]\label{weak}
If $\A$ is an approximate or pseudo amenable abelian Banach algebra, then $\A$ is weakly amenable.
\end{thm}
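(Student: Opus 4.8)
The plan is to reduce everything to the vanishing of derivations into symmetric bimodules. Since $\A$ is abelian, $\A^*$ is a symmetric (commutative) $\A$-bimodule, every inner derivation into it vanishes, and by the Bade--Curtis--Dales characterization \cite{BCD} weak amenability of $\A$ is equivalent to the statement that every bounded derivation $D\colon\A\to\A^*$ is zero (indeed, that every bounded derivation into an arbitrary symmetric bimodule is zero). The one observation I would exploit in both cases is that in a \emph{symmetric} bimodule every inner derivation $ad_\xi$ is identically zero, since $a\cdot\xi=\xi\cdot a$. Thus the two hypotheses differ only in how they exhibit $D$ as a limit of inner derivations.

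In the approximately amenable case I would fix a bounded derivation $D\colon\A\to\A^*$ and compose it with the canonical embedding $\A^*\hookrightarrow\A^{***}=(\A^{**})^*$. Because $\A$ is commutative, $\A^*$ is symmetric, and symmetry passes to successive duals, so $\A^{***}$ is again a symmetric, and in particular a dual, bimodule. By approximate amenability the induced derivation into $\A^{***}$ is approximately inner; but every inner derivation into the symmetric module $\A^{***}$ is zero, so the approximating net is identically zero and $D=0$. This case presents no real obstacle once the bidual is in place.

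In the pseudo-amenable case I cannot invoke approximate innerness and must work with an approximate diagonal $(u_i)\subset\A\hot\A$ directly. Taking $X=\A^*$ (the same works for any symmetric $X$), I would define the bounded linear map $\psi\colon\A\hot\A\to X$ by $\psi(a\otimes b)=a\cdot D(b)$. A short computation using $D(bc)=b\cdot D(c)+D(b)\cdot c$ together with symmetry of $X$ and commutativity of $\A$ cancels the cross terms and gives, for all $c\in\A$ and $u\in\A\hot\A$,
\[
\psi(c\cdot u-u\cdot c)=-\pi(u)\cdot D(c).
\]
Substituting $u=u_i$, using $c\cdot u_i-u_i\cdot c\to0$ and boundedness of $\psi$, yields $\pi(u_i)\cdot D(c)\to0$ for every $c$.

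Converting this into $D=0$ is the step I expect to be the main obstacle: the approximate identity $(\pi(u_i))$ coming from the diagonal need not be bounded and acts as an identity in the limit only on the essential part $\overline{\A\cdot X}$, not a priori on an arbitrary $D(c)$. I would get around this by testing on products only. Since $\A$ has a two-sided approximate identity, $\overline{\A^2}=\A$, so it suffices to show $D(ab)=0$. Writing $\pi(u_i)\cdot D(ab)=(\pi(u_i)a)\cdot D(b)+(\pi(u_i)b)\cdot D(a)$ (using symmetry) and letting $\pi(u_i)a\to a$, $\pi(u_i)b\to b$, the right-hand side converges to $a\cdot D(b)+D(a)\cdot b=D(ab)$; combined with $\pi(u_i)\cdot D(ab)\to0$ this forces $D(ab)=0$, and density of $\A^2$ in $\A$ gives $D\equiv0$. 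Hence $\A$ is weakly amenable.
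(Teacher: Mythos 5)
Your proof is correct, and since the survey gives no proof of this theorem (it is quoted from \cite{GZ}), the comparison is with the cited source, whose argument yours essentially reproduces: for the approximate case, inner derivations into a symmetric dual module such as $\A^*$ vanish identically, so approximate innerness forces $D=0$; for the pseudo case, the bounded map $\psi(a\otimes b)=a\cdot D(b)$ evaluated on $c\cdot u_i-u_i\cdot c$ yields $\pi(u_i)\cdot D(c)\to 0$. Two minor remarks. First, the detour through $\A^{***}$ is unnecessary: $\A^*$ is already the dual of the canonical bimodule $\A$, so approximate amenability applies to $D\colon\A\to\A^*$ directly. Second, the ``main obstacle'' you resolve with the product trick (testing $D(ab)$ and using density of the span of $\A^2$) admits a shortcut when the target is $\A^*$: for any $f\in\A^*$ one has $\la \pi(u_i)\cdot f, a\ra = \la f, a\pi(u_i)\ra \to \la f,a\ra$, i.e.\ $\pi(u_i)\cdot f\to f$ weak*, so $\pi(u_i)\cdot D(c)\to 0$ in norm immediately gives $D(c)=0$. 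Your version has the compensating advantage of working verbatim for derivations into an arbitrary symmetric Banach $\A$-bimodule, not just dual ones, which recovers the Bade--Curtis--Dales formulation of weak amenability in full.
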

This result may be useful in studying weak amenability of an abelian Banach algebra.

Recall that a Banach algebra $\A$ is \emph{approximately biprojective} if there is a net $(T_\al)$ of continuous bimodule morphisms from $\A$ into $\A\hot \A$ such that $\lim_\al \pi\circ T_\al (a) = a$ for $a \in \A$ \cite{ZHA}. We have the following relations.

\begin{thm} Let $\A$ be a Banach algebra.
\begin{enumerate}
\item The algebra $\A$ is pseudo-contractible if and only if it is approximate biprojective and has a central approximate identity \cite{GZ}.
\item If $\A$ is approximately biprojective and has an approximate identity, then it is pseudo-amenable.
\end{enumerate}
\end{thm}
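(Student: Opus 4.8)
The plan is to realize a (central) approximate diagonal as the image of a (central) approximate identity under the biprojectivity morphisms, and conversely to manufacture morphisms and an identity out of a diagonal. The single construction tying everything together is $u_{\al,\be}=T_\al(e_\be)$, where $(T_\al)$ are the approximate-biprojectivity morphisms and $(e_\be)$ is the (central) approximate identity; the three implications differ only in how much of the diagonal property survives and in which order the two limits must be taken.

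For the forward direction of (1), starting from a central approximate diagonal $(u_i)$ with $a\cdot u_i=u_i\cdot a$ and $\pi(u_i)a\to a$, I would first set $e_i=\pi(u_i)$. Since $\pi$ is a bimodule morphism and $a\cdot u_i=u_i\cdot a$, one gets $ae_i=\pi(a\cdot u_i)=\pi(u_i\cdot a)=e_ia$, so each $e_i$ is central, and $e_ia=\pi(u_i)a\to a$; thus $(e_i)$ is a central approximate identity. Next I would define $T_i\colon\A\to\A\hot\A$ by $T_i(a)=a\cdot u_i\,(=u_i\cdot a)$. Using that the left and right module actions on $\A\hot\A$ commute together with centrality of $u_i$, one checks $T_i(ba)=b\cdot T_i(a)$ and $T_i(ab)=a\cdot(u_i\cdot b)=(a\cdot u_i)\cdot b=T_i(a)\cdot b$, so $T_i$ is a bimodule morphism, and $\pi\circ T_i(a)=\pi(u_i\cdot a)=\pi(u_i)a\to a$. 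Hence $\A$ is approximately biprojective and has a central approximate identity.

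For the converse of (1), given morphisms $(T_\al)$ with $S_\al:=\pi\circ T_\al\to\mathrm{id}$ pointwise and a central approximate identity $(e_\be)$, I would set $u_{\al,\be}=T_\al(e_\be)$. Centrality is automatic: $a\cdot u_{\al,\be}=T_\al(ae_\be)=T_\al(e_\be a)=u_{\al,\be}\cdot a$ because $e_\be$ is central. For the identity property, $\pi(u_{\al,\be})=S_\al(e_\be)\to e_\be$ as $\al\to\infty$ for fixed $\be$, while $e_\be a\to a$ as $\be\to\infty$, so the iterated limit of $\pi(u_{\al,\be})a$ is $a$. To produce an honest net I would index by pairs $(F,\ep)$ with $F\subseteq\A$ finite: first choose $\be$ with $\|e_\be a-a\|<\ep/2$ on $F$, then $\al$ with $\|\pi(T_\al(e_\be))-e_\be\|$ small enough; the resulting $(u_{F,\ep})$ is a central approximate diagonal.

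Part (2) uses the same $w_{\al,\be}=T_\al(e_\be)$, but now $(e_\be)$ is only an ordinary approximate identity, so the bracket no longer vanishes: $a\cdot w_{\al,\be}-w_{\al,\be}\cdot a=T_\al(ae_\be-e_\be a)$. This is the one real obstacle: $(T_\al)$ need not be uniformly bounded, so I cannot make this small by sending $\be\to\infty$ uniformly in $\al$. The remedy is to exploit that each \emph{fixed} $T_\al$ is continuous: for fixed $\al$, $ae_\be-e_\be a\to 0$ forces $T_\al(ae_\be-e_\be a)\to 0$, while simultaneously $\pi(w_{\al,\be})a=S_\al(e_\be)a=S_\al(e_\be a)\to S_\al(a)$, after which $S_\al(a)\to a$ as $\al\to\infty$. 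Both requirements therefore demand that $\be$ be sent to infinity \emph{before} $\al$. I would encode this by a directed set of triples $(\al,F,\ep)$, and for each such triple choose $\be=\be(\al,F,\ep)$ so that $\|T_\al(ae_\be-e_\be a)\|<\ep$ and $\|S_\al(e_\be a)-S_\al(a)\|<\ep$ for all $a\in F$; then $w=T_\al(e_\be)$ satisfies $\|aw-wa\|<\ep$ and $\|\pi(w)a-a\|<\ep+\|S_\al(a)-a\|$ on $F$, which tends to $0$ as the triple increases. This yields an approximate diagonal, so $\A$ is pseudo-amenable. The nesting of the two limits, forced by the absence of a uniform bound on $(T_\al)$, is the crux of the argument.
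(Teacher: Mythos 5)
Your proposal is correct and uses exactly the paper's construction: the paper's proof of part (2) defines $u_{(\al,\be)}=T_\al(e_\be)$ and merely asserts that ``one can find a subnet'' forming an approximate diagonal, and your triple-indexed net --- sending $\be\to\infty$ before $\al\to\infty$ precisely because the $T_\al$ need not be uniformly bounded --- is the correct way to fill in that subnet extraction. Your argument for part (1) is also sound, but note the paper does not prove that part at all, citing \cite{GZ} instead.
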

\begin{proof}
To prove the second assertion, let $(T_\al)$ be the net of module morphisms described in the definition of approximate biprojectivity, and let $(e_\be)$ be an approximate identity for $\A$. We define $u_{(\al,\be)} = T_\al(e_\be)$. Then one can find a subnet of $(u_{(\al,\be)})$ which form an approximate diagonal for $\A$.
\end{proof}

\section{Some properties of generalized amenability for Banach algebras}

Let $\A$ be a Banach algebra. Let $X$ and $Y$ be left Banach $\A$ modules. Then $B(X,Y)$, the Banach space of all bounded linear operators from $X$ into $Y$ with the uniform norm, is a Banach $\A$ bimodule. The module actions are defined by
\[a\cdot f\,(x) = a(f(x)), \quad f\cdot a \, (x) = f(ax)  \quad (a\in \A,\; x\in X,\; f\in B(X,Y)).  \]

\begin{thm}\label{module}
Let $\A$ be an approximately amenable Banach algebra. Suppose that $f$: $X \to Y$ is a bounded left $\A$ module morphism.
\begin{enumerate}
\item If $f$ has a right inverse $F\in B(Y,X)$, then there is a net $(f_\al)\subset B(Y,X)$ of right inverses of $f$ such that $\|a\cdot f_\al -f_\al \cdot a\| \overset{\al}{\to} 0$ for all $a\in \A$.
\item If $f$ has a left inverse $H\in B(Y,X)$, then there is a net $(h_\al)\subset B(Y,X)$ of left inverses of $f$ such that $\|a\cdot h_\al -h_\al \cdot a\| \overset{\al}{\to} 0$ for all $a\in \A$.
\end{enumerate}
\end{thm}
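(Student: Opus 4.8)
The plan is to reduce both statements to a single application of approximate contractibility, which is available to us: by Theorem~\ref{ac is aa} an approximately amenable Banach algebra is approximately contractible, so \emph{every} continuous derivation from $\A$ into an arbitrary Banach $\A$-bimodule is approximately inner. I would treat parts (1) and (2) in parallel, the only difference being whether one composes with $f$ on the left or on the right.

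For part (1) I would introduce the map $\mathcal{R}\colon B(Y,X)\to B(Y,Y)$, $\mathcal{R}(g)=f\circ g$. The key preliminary computation is that $\mathcal{R}$ is a bounded $\A$-bimodule morphism: the identity $\mathcal{R}(a\cdot g)=a\cdot\mathcal{R}(g)$ is immediate, while for the right action one uses that $f$ is a left $\A$-module morphism, namely $f(ay)=af(y)$, to obtain $\mathcal{R}(g\cdot a)=\mathcal{R}(g)\cdot a$. Since $\mathcal{R}$ is bounded, its kernel $N=\{g\in B(Y,X):f\circ g=0\}$ is a closed subbimodule of $B(Y,X)$. One then checks directly that $\mathrm{id}_Y$ is a central element of $B(Y,Y)$, so because $F$ is a right inverse we have $\mathcal{R}(F)=\mathrm{id}_Y$, a central element; consequently the inner derivation $ad_F\colon\A\to B(Y,X)$, $ad_F(a)=a\cdot F-F\cdot a$, actually takes values in $N$, since $\mathcal{R}(ad_F(a))=a\cdot\mathrm{id}_Y-\mathrm{id}_Y\cdot a=0$ for all $a\in\A$.

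Now I would regard $ad_F$ as a (continuous) derivation from $\A$ into the Banach $\A$-bimodule $N$ and apply approximate contractibility to produce a net $(g_\al)\subset N$ with $a\cdot F-F\cdot a=\lim_\al(a\cdot g_\al-g_\al\cdot a)$ for each $a\in\A$. Setting $f_\al=F-g_\al$ then completes the argument: because each $g_\al$ lies in $N=\ker\mathcal{R}$ we have $\mathcal{R}(f_\al)=\mathcal{R}(F)=\mathrm{id}_Y$, so every $f_\al$ is again a genuine right inverse of $f$, while $a\cdot f_\al-f_\al\cdot a=(a\cdot F-F\cdot a)-(a\cdot g_\al-g_\al\cdot a)\to 0$. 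For part (2) I would run the mirror-image argument using $\mathcal{L}\colon B(Y,X)\to B(X,X)$, $\mathcal{L}(g)=g\circ f$, whose kernel $M$ is a closed subbimodule; here the right-action check $\mathcal{L}(g\cdot a)=\mathcal{L}(g)\cdot a$ again rests on $af(x)=f(ax)$, the element $\mathrm{id}_X$ is central in $B(X,X)$, and $ad_H$ maps into $M$ because $\mathcal{L}(H)=\mathrm{id}_X$. Approximate contractibility yields $(k_\al)\subset M$, and $h_\al=H-k_\al$ are the desired left inverses.

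The module-morphism verifications and the centrality checks are routine; the one point that carries the real content — and the place to be careful — is insisting that the correcting net live inside the kernel $N$ (resp. $M$) rather than merely inside $B(Y,X)$. This is precisely why one applies approximate contractibility to the subbimodule $N$ itself: the resulting approximately inner net stays in $\ker\mathcal{R}$, which is exactly what guarantees $\mathcal{R}(f_\al)=\mathrm{id}_Y$ and hence that each $f_\al$ is an exact right inverse and not just an approximate one. Had one only known $ad_F$ to be approximately inner in the ambient module $B(Y,X)$, the corrected operators could fail the equation $f\circ f_\al=\mathrm{id}_Y$.
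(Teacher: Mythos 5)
Your proof is correct, but it takes a genuinely different route from the paper. The paper argues via approximate diagonals: by Theorem~\ref{aa pa}(1), approximate amenability of $\A$ gives pseudo-amenability of $\As$, so there is an approximate diagonal $(u_\al)\subset \As\hot\As$ with $\pi(u_\al)=e$; the right inverses are then written down explicitly as $f_\al(y)=\varPsi(u_\al y)$, where $\varPsi(a\otimes y)=aF(y)$, and both the right-inverse identity and the quantitative estimate $\|a\cdot f_\al-f_\al\cdot a\|\leq \|F\|\,\|au_\al-u_\al a\|$ follow by direct computation. You instead invoke the equivalence of approximate amenability and approximate contractibility (Theorem~\ref{ac is aa}) and run the classical homological argument: $ad_F$ is a continuous derivation into the closed subbimodule $N=\ker\bigl(g\mapsto f\circ g\bigr)$ of $B(Y,X)$, and correcting $F$ by an approximately inner net taken inside $N$ preserves the exact right-inverse property while killing the commutators in operator norm. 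Your emphasis on why the net must live in $N$ is exactly the right point, and your implicit reliance on approximate contractibility handling arbitrary (non-dual) modules is precisely where plain approximate amenability alone would not suffice; this is also the mechanism the paper itself alludes to immediately after the theorem, when it notes that the exact-sequence splitting corollary can be obtained either from Theorem~\ref{module} or directly from Theorem~\ref{ac is aa}. What the paper's construction buys is explicitness: the commutator norm is controlled by the asymptotic centrality of the diagonal, so properties of $(u_\al)$ transfer quantitatively to $(f_\al)$. What yours buys is brevity (given Theorem~\ref{ac is aa}) and immediate generalizability, since the same kernel argument yields the admissible short exact sequence theorem with no additional work. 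One trivial slip in your write-up: for $\mathcal{R}(g)=f\circ g$ it is the \emph{left}-action identity $\mathcal{R}(a\cdot g)=a\cdot\mathcal{R}(g)$ that uses $f(ax)=af(x)$, while the right-action identity is the immediate one; you have these two attributions swapped (your corresponding attribution for $\mathcal{L}$ in part (2) is correct).
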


\begin{proof}
We prove the first assertion. The proof of the second one is similar. If $\A$ is approximately amenable, then $\As$ is pseudo amenable from Theorem~\ref{aa pa}(1). So $\As$ has an approximate diagonal $(u_\al)\subset \As\hot\As$ such that $\pi(u_\al) = e$, where $e$ is the identity of $\As$. We extend $X$ and $Y$ to left $\As$ modules by defining $ex = x$ for $x\in X$ or $Y$. Let $\varPsi$: $\As\hot Y \to X$ be the bounded linear operator specified by
$\varPsi(a\otimes y) = a F(y)$ ($a\in \As$ $y\in Y$). We now define $f_\al$: $Y\to X$ by $f_\al(y) = \varPsi (u_\al y)$ ($y\in Y$). Then clearly $f_\al \in B(Y,X)$ and $f_\al$ is a right inverse of $f$. Moreover 
\[  \|(a\cdot f_\al -f_\al \cdot a)(y)\| = \|\varPsi (au_\al y - u_\al a y)\| \leq \|F\| \|au_\al - u_\al a\|\|y\|  \]
for $a\in \A$ and $y\in Y$. Therefore, $\|a\cdot f_\al -f_\al \cdot a\|\overset{\al}{\to} 0$ ($a\in \A$).
\end{proof}

Using Theorem~\ref{module} directly or using Theorem~\ref{ac is aa}, we can derive an improvement of \cite[Theorem~2.2]{GL} as follows.

\begin{thm}
Suppose that $\A$ is approximately amenable. Let
\[  \Sigma:  0 \to X\overset{f}{\to} Y \overset{g}{\to} Z \to 0  \]
be an admissible short exact sequence of left Banach $\A$ modules. Then $\Sigma$ approximately splits. That is, there is a net $(g_\al)$: $Z\to Y$ of right inverse maps to $g$ such that $\lim_\al (a\cdot g_\al - g_\al \cdot a) =0$ for all $a\in \A$.
\end{thm}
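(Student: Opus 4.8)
The plan is to read the statement as a direct corollary of Theorem~\ref{module}(1), taking the module morphism there to be the quotient map $g$. Being \emph{admissible}, the sequence $\Sigma$ splits as a sequence of Banach spaces, so $g\colon Y\to Z$ admits a bounded linear (not necessarily module) right inverse $G\in B(Z,Y)$ with $g\circ G=\mathrm{id}_Z$. I would then regard $g$ as a bounded left $\A$-module morphism from $Y$ to $Z$ having the right inverse $G$, and invoke Theorem~\ref{module}(1) verbatim: it produces a net $(g_\al)\subset B(Z,Y)$ of right inverses of $g$ with $\|a\cdot g_\al-g_\al\cdot a\|\to 0$ for all $a\in\A$. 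This is precisely the required approximate splitting, so the only thing to do is match notation.

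Since the excerpt offers a second route ``using Theorem~\ref{ac is aa}'', I would also present it, as it makes the mechanism transparent and shows that plain approximate amenability suffices. I would fix the bounded section $G\in B(Z,Y)$, turn $B(Z,Y)$ into a Banach $\A$-bimodule by $a\cdot\phi(z)=a(\phi(z))$ and $\phi\cdot a(z)=\phi(az)$ as in the preamble to this section, and consider the inner derivation $D=\mathrm{ad}_G$, i.e.\ $D(a)=a\cdot G-G\cdot a$. The key observation is that $D$ lands in the closed sub-bimodule $N=\{\phi\in B(Z,Y):g\circ\phi=0\}$: since $g$ is a module morphism, $g\bigl(D(a)(z)\bigr)=a\,g(G(z))-g(G(az))=az-az=0$ for every $z\in Z$. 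By Theorem~\ref{ac is aa}, approximate amenability is equivalent to approximate contractibility, so $D\colon\A\to N$ is approximately inner; I would pick a net $(\phi_\al)\subset N$ with $D(a)=\lim_\al(a\cdot\phi_\al-\phi_\al\cdot a)$ and set $g_\al=G-\phi_\al$. Then $g\circ\phi_\al=0$ gives $g\circ g_\al=\mathrm{id}_Z$, while $a\cdot g_\al-g_\al\cdot a=D(a)-(a\cdot\phi_\al-\phi_\al\cdot a)\to 0$.

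There is little genuine difficulty here, since the analytic content is already packaged in Theorem~\ref{module} (ultimately in the pseudo-amenability of $\As$ via Theorem~\ref{aa pa}(1)). The points I would take care over are two. First, one must read \emph{admissible} correctly as supplying a \emph{bounded linear} section of $g$, not a module section; without this the whole argument has no starting point. Second, in the derivation route the target module $N$ is generally not a dual module, so I could not apply the definition of approximate amenability directly to $D$; the essential step is the passage from approximate amenability to approximate contractibility furnished by Theorem~\ref{ac is aa}. Once these are in place the verification that each $g_\al$ is a right inverse and that the commutators tend to zero is routine.
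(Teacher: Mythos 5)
Your proposal is correct and follows exactly the two routes the paper itself indicates (``Using Theorem~\ref{module} directly or using Theorem~\ref{ac is aa}''): the first reading admissibility as supplying a bounded linear section $G$ of $g$ and invoking Theorem~\ref{module}(1) verbatim, the second considering the derivation $a\mapsto a\cdot G-G\cdot a$ into the sub-bimodule $\{\phi\in B(Z,Y):g\circ\phi=0\}$ and using the equivalence of approximate amenability and approximate contractibility. Both verifications are accurate, including the key point that this sub-bimodule need not be a dual module, which is precisely why Theorem~\ref{ac is aa} is needed and why the result improves on \cite[Theorem~2.2]{GL}.
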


\subsection{Approximate identities}
From the definition it is easy to see that a pseudo-amenable Banach algebra has a two-sided approximate identity, and a pseudo-contractible Banach algebra has a central approximate identity. For approximately amenable Banach algebras we have the following.
\begin{thm}\label{a.i.} Let $\A$ be a Banach algebra.
\begin{enumerate}
\item If $\A$ is approximately amenable, then it has a left and a right approximate identities \cite{GL};
\item If $\A$ is boundedly (resp. sequentially) approximately contractible, then it has a bounded approximate identity (resp. sequential bounded approximate identity) \cite{CGZ}.
\end{enumerate}
\end{thm}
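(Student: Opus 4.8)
The plan is to realise the one-sided approximate identities as the images of an explicit derivation and then to worry about boundedness. For part (1), I would first invoke Theorem~\ref{ac is aa} to replace approximate amenability by approximate contractibility, so that derivations into \emph{every} bimodule (not merely dual ones) are approximately inner. Equip the underlying space of $\A$ with the bimodule structure $X=\A$ whose left action is the product and whose right action is trivial, i.e. $a\cdot x = ax$ and $x\cdot a = 0$; then the identity map $D=\mathrm{id}_\A\colon \A\to X$ is a continuous derivation, since $D(ab)=ab=a\cdot D(b)+D(a)\cdot b$. Approximate innerness of $D$ yields a net $(x_\al)\subset\A$ with $ax_\al=\mathrm{ad}_{x_\al}(a)\to a$ for every $a\in\A$, that is, a right approximate identity. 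The mirror-image module (trivial left action, product right action) furnishes a left approximate identity, completing part~(1) without any boundedness assumption.

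For part~(2) I would run the same derivations under the bounded (resp. sequential) hypothesis. The point is that boundedness of the net $(\mathrm{ad}_{x_\al})$ \emph{as operators into} $X$ says precisely $\sup_\al\|a\mapsto ax_\al\|=\sup_\al\|R_{x_\al}\|\le K$, so the right approximate identity is bounded in the right multiplier norm, and symmetrically the left one is bounded in the left multiplier norm. Equivalently, via Theorem~\ref{aa pa}(2) one passes to $\As$ being boundedly (resp. sequentially) pseudo-amenable, takes an approximate diagonal $(u_\al)$ with $\pi(u_\al)=e$, writes $u_\al=v_\al-F_\al\otimes e-e\otimes G_\al+e\otimes e$ with $v_\al\in\A\hot\A$ and $\pi(v_\al)=F_\al+G_\al$, and reads off from the bounded commutators $au_\al-u_\al a$, projected onto the summands $\A\otimes e$ and $e\otimes\A$, that $(F_\al)$ and $(G_\al)$ are multiplier-bounded right and left approximate identities. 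Manufacturing a two-sided approximate identity from a left and a right one (for instance $F_\al+G_\al-G_\al F_\al$) is then routine.

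The hard part is the passage from \emph{multiplier}-bounded to genuinely \emph{norm}-bounded, and this cannot be purely formal: an algebra carrying only multiplier-bounded one-sided approximate identities need not carry a norm-bounded one, since a small multiplier norm $\|R_{x_\al}\|$ gives no control of $\|x_\al\|$ in $\A$ (for the same reason, the natural mixed identity $E\in\A^{**}$ determined by $E(\phi\cdot a)=\phi(a)$ is well defined but in general unbounded). The amenable case evades the issue because the bounded-net, strong-operator characterisation of amenability from \cite{GOU} supplies a net bounded in the \emph{norm} of the module; bounded approximate contractibility supplies only an operator bound, and recovering a norm bound is exactly the substance of the theorem. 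The route I would take is to produce a norm-bounded approximate mixed identity $(\Phi_\al)$ in $\A^{**}$: once $\sup_\al\|\Phi_\al\|<\infty$ with $a\cdot\Phi_\al\to a$ and $\Phi_\al\cdot a\to a$ weak$^*$, a weak$^*$-cluster point is a mixed identity of controlled norm, whence a Goldstine-plus-Mazur (convex-combination) argument descends it to a genuine bounded approximate identity in $\A$. Obtaining such a norm-bounded $(\Phi_\al)$ is where one must exploit the \emph{full} two-sided structure of the bounded diagonal $(u_\al)$ — both legs together with the relation $\pi(v_\al)=F_\al+G_\al$ and the bounded $\A\hot\A$-component of the commutators — rather than either one-sided piece in isolation; this is the technical heart carried out in \cite{CGZ}. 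In the sequential case one has the further bookkeeping of keeping every choice along a single sequence, which forces a diagonalisation but introduces no new conceptual difficulty.
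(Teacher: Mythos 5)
The paper itself gives no proof of this theorem---both parts are quoted from \cite{GL} and \cite{CGZ}---so your proposal has to stand on its own.

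Your part (1) is correct, and it is a genuinely different route from the one in \cite{GL}. Since approximate amenability only sees dual modules, \cite{GL} takes the derivation to be the canonical embedding of $\A$ into $\A^{**}$ endowed with a trivial one-sided action (a dual module), obtains a net in $\A^{**}$, and then descends to $\A$ by a Goldstine--Mazur convexity argument. You instead invoke Theorem~\ref{ac is aa} to pass to approximate contractibility and then use the non-dual module $\A$ with one trivial action, where the identity map is a derivation; the one-sided approximate identities then appear directly inside $\A$. This is legitimate and shorter: the convexity argument has simply been outsourced to Theorem~\ref{ac is aa}, whose proof in \cite{GLZ} does not rely on approximate identities, so there is no circularity.

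Part (2), however, has a genuine gap, in two places. First, your claim that manufacturing a two-sided approximate identity from $(F_\al)$ and $(G_\al)$ via $F_\al+G_\al-G_\al F_\al$ is ``routine'' is false in this setting. Writing $u=F+G-GF$ in $\As$, one computes $ua-a=(Ga-a)+(e-G)(Fa)$ and $au-a=(aF-a)+(aG)(e-F)$. To kill $(e-G)(Fa)$ you need either the left approximate identity property of $G$ applied to the \emph{moving} element $Fa$ (which the a.i.\ property does not control when $F$ and $G$ carry the same index, and which forces conflicting iterated limits if you let them run independently), or the decomposition $(e-G)(Fa)=(e-G)(Fa-a)+(a-Ga)$, whose first term is bounded by $(1+K)\|Fa-a\|$ using the left multiplier bound of $G$ --- but $Fa\not\to a$, since $F$ is only a \emph{right} approximate identity. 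The symmetric obstruction kills $(aG)(e-F)$. The multiplier bounds you actually extracted ($\|xF_\al\|\le K\|x\|$ and $\|G_\al x\|\le K\|x\|$) sit on exactly the wrong sides to repair this; with genuine norm bounds the classical trick works, but norm bounds are what you are trying to prove. This is not a pedantic point: it is precisely why Question~\ref{tai} and the question following it are recorded in the paper as open.

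Second, and more fundamentally, the step you label ``the technical heart carried out in \cite{CGZ}''---passing from the bounded diagonal data to a norm-bounded net (or a norm-bounded mixed identity in $\A^{**}$)---\emph{is} Theorem~\ref{a.i.}(2); it is the very result of \cite{CGZ} that the statement cites. Deferring to it makes your argument circular as a proof. Your reduction scheme (bounded mixed identity in $\A^{**}$, weak* cluster point, Goldstine--Mazur descent) is sensible, and your extraction of the multiplier-bounded one-sided approximate identities from $u_\al=v_\al-F_\al\otimes e-e\otimes G_\al+e\otimes e$ is correct, but the bridge from those data to a bounded net is missing, and that bridge is the whole theorem.
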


In fact, all known approximately amenable Banach algebras have a bounded approximate identity. However, we do not know this is true in general or not.

\begin{question}\label{tai}
Does every approximately amenable Banach algebra have a two-sided approximate identity? Does it have a bounded approximate identity?
\end{question}

There were some partial results to answer the question in \cite{GLZ}. It is interesting to mention here that if $\A\oplus \A$ is approximately amenable, then $\A$ must have a two-sided approximate identity \cite{GLZ}. So the above question links to Question~\ref{A+B} below. In Fr\'echet algebra setting, an approximately amenable {Fr\'echet} algebra which has no bounded approximate identity was constructed in \cite{LR}.

\begin{question}
 If $\A$ is boundedly approximately amenable, does it have a multiplier-bounded approximate identity?
\end{question}

 If the answer to Question 5 is affirmative, then a boundedly approximately amenable Banach algebra must have a bounded approximate identity \cite[Theorem~3.3]{CGZ}. If the answer is negative, then we may answer Question~2 in the negative by Theorem~\ref{a.i.}(2). 

\subsection{Direct sum and tensor product}
A notable property of pseudo-amenability and pseudo-contractibility is that the two classes are closed under taking $c_0$ and $\ell^p$ direct sums.

\begin{thm}[\cite{GZ}] If $\{\A_\al : \; \al\in \Gamma\}$ is a collection of pseudo-amenable/pseudo-contractible Banach algebras, then $\overset{p} {\oplus} _{\al \in \Gamma}\A_\al$, the $\ell^p$ direct sum of the collection, is pseudo-amenable/pseudo-contractible for any $1\leq p < \infty$ or $p=0$ (here $\ell^0$ means $c_0$).
\end{thm}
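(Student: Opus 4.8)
The plan is to verify pseudo-amenability through its ``local'' reformulation: a Banach algebra $\A$ is pseudo-amenable precisely when, for every finite set $S\subset\A$ and every $\ep>0$, there is a single $u\in\A\hot\A$ with $\|au-ua\|<\ep$ and $\|\pi(u)a-a\|<\ep$ for all $a\in S$ (the net is recovered by directing the pairs $(S,\ep)$ by inclusion in the first coordinate and reverse order in the second). So I would fix such an $S=\{a^{(1)},\dots,a^{(m)}\}$ and $\ep$, write each $a^{(k)}=(a^{(k)}_\al)_{\al\in\Gamma}$, and exploit that membership in the $\ell^p$- (or $c_0$-) sum forces the tails to be small: there is a finite $F\subset\Gamma$ with $\|a^{(k)}-a^{(k)}|_F\|<\de$ for every $k$, where $\de$ is a small number to be fixed at the end and $a^{(k)}|_F$ denotes truncation to the coordinates in $F$.

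Next I would build a diagonal on the finite block $\A_F=\bigoplus_{\al\in F}\A_\al$. Since $F$ is finite and each $\A_\al$ is pseudo-amenable with approximate diagonal $(u^\al_i)_i$, I can make finitely many index choices $i_\al$ ($\al\in F$), one per coordinate, so that $v:=\sum_{\al\in F}u^\al_{i_\al}\in\A_F\hot\A_F$ satisfies $\|bv-vb\|<\de$ and $\|\pi(v)b-b\|<\de$ simultaneously for all of the finitely many truncations $b=a^{(k)}|_F$. This is nothing but the elementary fact that a finite direct sum of pseudo-amenable algebras is pseudo-amenable, proved by summing the individual diagonals over the product directed set and using that the summands are orthogonal ideals. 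I would then regard $v$ inside $\A\hot\A$ through the contractive inclusion $\A_F\hot\A_F\hookrightarrow\A\hot\A$ coming from the complementation $\A=\A_F\oplus\A_{\Gamma\setminus F}$.

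The crucial mechanism, and the step where the infinitude of $\Gamma$ is defused, is the orthogonality $\A_F\cdot\A_{\Gamma\setminus F}=\A_{\Gamma\setminus F}\cdot\A_F=\{0\}$. Writing $a=a|_F+a''$ for $a\in S$, with $\|a''\|<\de$, both legs of $v$ lie in $\A_F$, so $a''v=va''=0$ and $\pi(v)a''=0$. Hence $av-va=(a|_F)v-v(a|_F)$ and $\pi(v)a=\pi(v)(a|_F)$, and contractivity of the inclusion lets me read off $\|av-va\|<\de$ and $\|\pi(v)a-a\|\le\|\pi(v)(a|_F)-a|_F\|+\|a''\|<2\de$. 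Taking $\de=\ep/2$ makes $u:=v$ work for all of $S$, finishing the pseudo-amenable case. I expect this orthogonality bookkeeping to be the only real obstacle: the naive attempt to string the summand diagonals into one net indexed by finite subsets of $\Gamma$ fails because $\pi(u^\al_i)$ need not be norm-bounded, so the untruncated tail coordinates cannot be controlled directly; truncating $a$ first and letting $\A_F\cdot\A_{\Gamma\setminus F}=\{0\}$ annihilate the remainder is exactly what circumvents this.

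For pseudo-contractibility the argument is verbatim the same, with ``$<\de$'' replaced by ``$=0$'' in the commutator estimates: each $(u^\al_i)$ is central, so $v$ is a central diagonal for $\A_F$, giving $(a|_F)v-v(a|_F)=0$ and therefore $av-va=0$ exactly for every $a\in\A$, while $\pi(v)a\to a$ is obtained as above. The case $p=0$ is identical, using the supremum norm and $\|a''\|=\sup_{\al\notin F}\|a_\al\|<\de$ in place of the $\ell^p$ tail estimate.
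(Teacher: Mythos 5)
Your proof is correct and complete. Note that the survey itself gives no proof of this statement---it is quoted from \cite{GZ}---so there is no in-paper argument to compare against; your route (the reformulation of pseudo-amenability via finite sets and tolerances, truncation of $\ell^p$- or $c_0$-elements to a finite block $\A_F$, a diagonal for the finite block obtained from the orthogonal summands, and the orthogonality $\A_F\cdot\A_{\Gamma\setminus F}=\{0\}$ to annihilate the tails) is the natural one for this result. Your side remark is also the right diagnosis of the only real danger: a net indexed monotonically by finite subsets of $\Gamma$ cannot work in general, because pseudo-amenability gives no norm control on the $u^\al_i$, so the coordinates outside the chosen finite set must be removed from $a$ (where they are small) rather than estimated against $v$; likewise your treatment of the central case, where exact commutation of $v$ with all of $\A$ follows from centrality in each $\A_\al$ plus orthogonality, is sound.
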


Approximate amenability lacks this property. For example, even $\ell^1$, the $\ell^1$-direct sum of $\C$,  is not approximately amenable. But we conjecture that the class of approximately amenable Banach algebras should be closed under taking finite direct sums. This is true for bounded approximate contractibility.
\begin{thm}[\cite{CGZ}]
 If $\A$ and $\B$ are boundedly approximately contractible, then so is $\A\oplus \B$.
\end{thm}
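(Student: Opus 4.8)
The plan is to reduce the entire statement to bounded pseudo-amenability by means of Theorem~\ref{aa psa}(2). By that characterization, the hypothesis that $\A$ and $\B$ are boundedly approximately contractible is equivalent to saying that each of $\A$ and $\B$ is boundedly pseudo-amenable and carries a bounded approximate identity; and the conclusion we seek, that $\A\oplus\B$ is boundedly approximately contractible, is equivalent to establishing these two structural properties for the direct sum. So it suffices to verify that $\A\oplus\B$ is boundedly pseudo-amenable and possesses a bounded approximate identity. This is precisely the point where the direct-sum hypothesis is used, rather than the full force of approximate contractibility, and it is also why the argument succeeds here while the analogous finite-direct-sum question for plain approximate amenability remains open: there is no comparable pseudo-amenable reformulation in the unbounded case.

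For the approximate diagonal I would first record the canonical identification (isometric for the $\ell^1$-norm, and the choice of equivalent direct-sum norm is immaterial)
\[ (\A\oplus\B)\hot(\A\oplus\B)\cong(\A\hot\A)\oplus(\A\hot\B)\oplus(\B\hot\A)\oplus(\B\hot\B), \]
under which $\A\hot\A$ and $\B\hot\B$ sit as complemented summands. Let $(u_i)\subset\A\hot\A$ and $(v_j)\subset\B\hot\B$ be bounded approximate diagonals for $\A$ and $\B$, with constants $K_\A$ and $K_\B$, and regard each $u_i$ and $v_j$ inside $(\A\oplus\B)\hot(\A\oplus\B)$ via this embedding. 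I would then put $w_{(i,j)}=u_i+v_j$, indexed over the product directed set. Since the cross summands $\A\hot\B$ and $\B\hot\A$ never appear, a direct computation gives, for $(a,b)\in\A\oplus\B$,
\[ (a,b)\,w_{(i,j)}-w_{(i,j)}\,(a,b)=(au_i-u_ia)\oplus(bv_j-v_jb),\qquad \pi(w_{(i,j)})(a,b)=\bigl(\pi(u_i)a\bigr)\oplus\bigl(\pi(v_j)b\bigr). \]
Along the product net the first expression tends to $0$ and the second to $(a,b)$, while the defining estimates of bounded pseudo-amenability transfer with the single constant $K=\max(K_\A,K_\B)$, since the coordinate norms are dominated by $\|(a,b)\|$. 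Hence $\A\oplus\B$ is boundedly pseudo-amenable.

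Finally, if $(e_\al)$ and $(f_\be)$ are bounded approximate identities for $\A$ and $\B$, then $\bigl((e_\al,f_\be)\bigr)$, again indexed over the product directed set, is a bounded approximate identity for $\A\oplus\B$. Feeding bounded pseudo-amenability together with this bounded approximate identity into the ``$\Leftarrow$'' direction of Theorem~\ref{aa psa}(2) yields that $\A\oplus\B$ is boundedly approximately contractible, completing the argument. I expect no deep obstacle: the substance is entirely supplied by Theorem~\ref{aa psa}(2), and the remaining difficulties are purely organizational, namely keeping the boundedness constant uniform across the two factors and running all the nets over a common product directed set so that the required limits hold simultaneously.
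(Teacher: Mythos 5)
Your argument is correct, and it is worth noting at the outset that the survey itself offers no proof to compare against: the theorem is stated with a bare citation to \cite{CGZ}, where the original argument is a direct one, working with the characterization of bounded approximate contractibility through nets in the unitization $\A^\sharp\hot\A^\sharp$ rather than through pseudo-amenability. Your route is genuinely different and, within the survey's framework, legitimate: Theorem~\ref{aa psa}(2) does give the two-way reduction you use, and your verification of the two ingredients for $\A\oplus\B$ is sound --- the summands $\A\hot\A$ and $\B\hot\B$ are indeed invariant under the $\A\oplus\B$-actions inside $(\A\oplus\B)\hot(\A\oplus\B)$, the cross terms never arise, the product-net limits hold, and multiplier-boundedness transfers (with constant $\max(K_\A,K_\B)$ for the $\ell^1$-norm; for the max norm one gets $K_\A+K_\B$, but the exact constant is immaterial); likewise $\bigl((e_\al,f_\be)\bigr)$ is a bounded approximate identity. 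What your approach buys is transparency: approximate diagonals simply add across direct summands, and this isolates exactly why the bounded case goes through while Question~\ref{A+B} for plain approximate amenability stays open. What it costs is self-containedness: the entire substance now rests on the bounded case of Theorem~\ref{aa psa}(2), which this survey does not prove (it proves only the sequential case, attributing the bounded case to \cite{GSZ}), so your proof is only as solid as that reference; one should confirm that \cite{GSZ} does not itself invoke the direct-sum theorem of \cite{CGZ} --- this appears safe, since the equivalence concerns a single algebra and the analogous sequential argument in the survey uses only \cite[Corollary~3.4]{CGZ}, i.e.\ Theorem~\ref{a.i.}(2), but it is a dependence you should acknowledge rather than leave implicit.
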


For approximate amenability we only have a partial result.
\begin{thm}[\cite{GLZ}]
 If $\A$ and $\B$ are approximately amenable and one of them has a bounded approximate identity, then $\A\oplus \B$ is approximately amenable.
\end{thm}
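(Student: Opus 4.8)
The plan is to verify the definition directly: given any Banach $\A\oplus\B$-bimodule $X$ and any continuous derivation $D\colon \A\oplus\B\to X^{*}$, I will produce a net implementing $D$ approximately. By Theorem~\ref{ac is aa} it suffices to obtain \emph{weak*} convergence, which is precisely what makes the bounded approximate identity usable. Writing $D_{\A}(a)=D(a,0)$ and $D_{\B}(b)=D(0,b)$, the derivation identity splits into three pieces: $D_{\A}$ is a derivation of $\A$ into $X^{*}$, $D_{\B}$ is a derivation of $\B$ into $X^{*}$, together with the two coupling relations $(a,0)\cdot D_{\B}(b)+D_{\A}(a)\cdot(0,b)=0$ and $(0,b)\cdot D_{\A}(a)+D_{\B}(b)\cdot(a,0)=0$, obtained by applying $D$ to the products $(a,0)(0,b)$ and $(0,b)(a,0)$, both of which vanish since $\A\oplus\B$ is a direct product.

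Since $\A$ and $\B$ are approximately amenable, Theorem~\ref{ac is aa} provides nets $(\xi_{s}),(\eta_{t})\subset X^{*}$ with $D_{\A}(a)=\lim_{s}\bigl((a,0)\cdot\xi_{s}-\xi_{s}\cdot(a,0)\bigr)$ and $D_{\B}(b)=\lim_{t}\bigl((0,b)\cdot\eta_{t}-\eta_{t}\cdot(0,b)\bigr)$ weak*. The naive candidate $\xi_{s}+\eta_{t}$ fails only because of the cross actions $(a,0)\cdot\eta_{t}-\eta_{t}\cdot(a,0)$ and $(0,b)\cdot\xi_{s}-\xi_{s}\cdot(0,b)$. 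Here the bounded approximate identity $(e_{\beta})$ of $\B$ enters. Because $\A\cdot\B=\B\cdot\A=\{0\}$ in the algebra, we have the crucial vanishing $(a,0)\cdot(0,e_{\beta})=(0,e_{\beta})\cdot(a,0)=0$; hence replacing $\eta_{t}$ by the sandwiched net $(0,e_{\beta})\cdot\eta_{t}\cdot(0,e_{\beta})$ annihilates its $\A$-cross actions outright. Boundedness of $(e_{\beta})$ lets me pass to a weak* cluster point $E\in(\A\oplus\B)^{**}$ of $\bigl((0,e_{\beta})\bigr)$; this $E$ acts as an identity for the $\B$-actions and kills the $\A$-actions, so it induces the corner projection $\xi\mapsto E\cdot\xi\cdot E$ of $X^{*}$ onto a piece on which $\A$ acts trivially. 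The same device applied to the (possibly unbounded, but in the limit harmless) left and right approximate identities of $\A$ supplied by Theorem~\ref{a.i.}(1) handles the complementary corner for $(\xi_{s})$.

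The cost of sandwiching is that $(0,e_{\beta})\cdot\eta_{t}\cdot(0,e_{\beta})$ implements not $D_{\B}(b)$ but its corner compression $E\cdot D_{\B}(b)\cdot E$, the discrepancy being controlled because $be_{\beta}-e_{\beta}b\to 0$ while $\|\eta_{t}\|\,\|e_{\beta}\|$ stays bounded for each fixed $t$. It is exactly the two coupling relations that reconcile these compressions: they identify the off-corner parts of $D_{\B}(b)$ with $\A$-translates of the $D_{\A}$-data and conversely, so that the corner discrepancies of the two sandwiched nets cancel when the nets are added. Assembling a suitable diagonal net out of $(\xi_{s})$, $(\eta_{t})$ and $(e_{\beta})$ then yields a single net implementing $D$ weak*-approximately, and Theorem~\ref{ac is aa} upgrades this to approximate amenability of $\A\oplus\B$. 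As a sanity check on the hypothesis, when \emph{both} factors carry bounded approximate identities the argument collapses to a one-liner: each factor is then pseudo-amenable by Theorem~\ref{aa psa}(1), a finite direct sum of pseudo-amenable algebras is pseudo-amenable (the $\ell^{p}$-sum result above with finite index set), and $\A\oplus\B$ inherits a bounded approximate identity, so Theorem~\ref{aa psa}(1) applies once more.

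The main obstacle is precisely the management of the cross/corner terms: neither factor's implementing net is a priori bounded, so the off-diagonal module actions cannot be estimated away naively, and one must lean on the bounded approximate identity of $\B$ both to form the weak* limit $E$ and to make the sandwiching legitimate, while invoking the coupling relations to cancel the resulting corner discrepancies. I expect the genuinely delicate point to be organizing the three nets $(\xi_{s})$, $(\eta_{t})$, $(e_{\beta})$ into one net along which all of the required weak* limits hold simultaneously; this is also where the asymmetry of the hypothesis (only $\B$ need have a bounded approximate identity) is felt, since the $\A$-side must be pushed through using only unbounded approximate identities.
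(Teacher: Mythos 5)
Your skeleton is right, and it is worth saying so: reducing to weak* approximate innerness via Theorem~\ref{ac is aa}, splitting $D$ into $D_{\A},D_{\B}$ plus the two coupling relations, and passing to a weak* cluster point $E\in(\A\oplus\B)^{**}$ of $\bigl((0,e_\beta)\bigr)$ --- which acts on $X^*$ as a bounded \emph{central} idempotent with $(a,b)E=E(a,b)=(0,b)$ --- is exactly how the bounded approximate identity of $\B$ should be used; your closing remark also correctly disposes of the easier case where \emph{both} algebras have bounded approximate identities. (The survey states this theorem without proof, citing \cite{GLZ}, so your argument must stand alone.) The fatal gap is on the $\A$-side. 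You propose to apply ``the same device'' to the left/right approximate identities $(f_\gamma)$ of $\A$ from Theorem~\ref{a.i.}(1), calling their unboundedness ``in the limit harmless.'' It is not: an unbounded net $\bigl((f_\gamma,0)\bigr)$ has no weak* cluster point, so there is no analogue of $E$ on the $\A$-side; the sandwich maps $\Phi\mapsto(f_\gamma,0)\Phi(f_\gamma,0)$ are not uniformly bounded, so the commutator errors, of size $\|af_\gamma-f_\gamma a\|\,\|\xi_s\|\,\|f_\gamma\|$, cannot be controlled; and there is no reason that $(f_\gamma,0)\cdot\Phi\to\Phi$ in \emph{any} topology for general $\Phi\in X^*$, since $(f_\gamma)$ is an approximate identity for the algebra, not for the module (if $\A$ acts trivially on $X$, sandwiching kills $\Phi$ outright). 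Your off-corner step is also unsupported: the ``cancellation via the coupling relations'' cannot happen as described, because the $\A$-net $(\xi_s)$ is unbounded and its $\B$-cross actions $(0,b)\xi_s-\xi_s(0,b)$ are simply uncontrolled.

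The missing idea is that you never need a second idempotent: the single bounded $E$ already yields a four-corner decomposition $\Phi\mapsto E\Phi E$, $E\Phi-E\Phi E$, $\Phi E-E\Phi E$, $\Phi-E\Phi-\Phi E+E\Phi E$, each corner a submodule because $E$ acts centrally, so $D$ splits into four derivations $D_{11},D_{12},D_{21},D_{22}$. On the $E\Phi E$ corner $\A$ acts as zero, so $D_{11}$ vanishes on $\A$ (density of $\A^2$ in $\A$, which follows from Theorem~\ref{a.i.}(1)) and is a derivation of $\B$ into the dual $\B$-module $X^*$: approximate amenability of $\B$ applies, and compressing the resulting net by $E$ on both sides keeps it admissible. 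Dually, on the complementary corner $\B$ acts as zero, so $D_{22}$ vanishes on $\B$ (here $\overline{\B^2}=\B$ by the bai) and is a derivation of $\A$ into the dual $\A$-module $X^*$: approximate amenability of $\A$ applies \emph{directly}, with no sandwiching by $\A$-approximate identities at all --- this is precisely where the asymmetry of the hypothesis is absorbed. The off-diagonal pieces are implemented not by cancellation but by the explicit bounded nets $\bigl(D_{12}(0,e_\beta)\bigr)_\beta$ and $\bigl(-D_{21}(0,e_\beta)\bigr)_\beta$: one checks $(a,b)\cdot D_{12}(0,e_\beta)=D_{12}(0,be_\beta)\to D_{12}(0,b)$ in norm, while $-D_{12}(0,e_\beta)\cdot(a,b)=(0,e_\beta)\cdot D_{12}(a,0)\to E\,D_{12}(a,0)=D_{12}(a,0)$ weak*, the two identities coming from the derivation rule applied to $(0,b)(0,e_\beta)$ and to the zero product $(0,e_\beta)(a,0)$ --- so your coupling relations do enter, but through these nets rather than through cancellation between the $\A$- and $\B$-data. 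Summing the four families over a product directed set gives weak* approximate innerness of $D$, and Theorem~\ref{ac is aa} finishes. With these two repairs your outline becomes a correct proof; as written, it would fail at the $\A$-corner.
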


\begin{question}\label{A+B}
Is $\A\oplus \B$ approximately amenable if both $\A$ and $\B$ are?
\end{question}

It was shown in \cite{GL} that, if $\A$ is approximately amenable and has a bounded approximate identity and if $\B$ is amenable, then $\A\hot \B$ is approximately amenable. Besides this, little has been known about generalized amenability of tensor products of Banach algebras. Here we have the following.

\begin{thm}
If $\A$ and $\B$ are boundedly pseudo-contractible, then so is $\A\hot \B$.
\end{thm}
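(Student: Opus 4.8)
The plan is to manufacture a central approximate diagonal for $\A\hot\B$ out of central approximate diagonals for $\A$ and for $\B$, using the canonical flip identification of iterated projective tensor products. Since $\A$ and $\B$ are boundedly pseudo-contractible, choose a central approximate diagonal $(u_\al)\subset\A\hot\A$ for $\A$ with $\|\pi(u_\al)a\|\leq K_\A\|a\|$ ($a\in\A$) and a central approximate diagonal $(v_\be)\subset\B\hot\B$ for $\B$ with $\|\pi(v_\be)b\|\leq K_\B\|b\|$ ($b\in\B$). The structural input is the isometric isomorphism of Banach spaces
\[ \theta:\ (\A\hot\A)\hot(\B\hot\B)\ \to\ (\A\hot\B)\hot(\A\hot\B) \]
obtained by interchanging the two middle factors, which sends $(a_1\otimes a_2)\otimes(b_1\otimes b_2)$ to $(a_1\otimes b_1)\otimes(a_2\otimes b_2)$. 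I would put $w_{(\al,\be)}=\theta(u_\al\otimes v_\be)$ and claim that $(w_{(\al,\be)})$, indexed by the product directed set $\{(\al,\be)\}$, is a central approximate diagonal for $\A\hot\B$ with the required bound.

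First I would verify centrality. Tracking the $\A\hot\B$-bimodule action across $\theta$, the left action of an elementary tensor $a\otimes b$ on $(\A\hot\A)\hot(\B\hot\B)$ corresponds to the left $\A$-action of $a$ on the first factor together with the left $\B$-action of $b$ on the second, and symmetrically for the right action. Because $a\cdot u_\al=u_\al\cdot a$ and $b\cdot v_\be=v_\be\cdot b$, it follows that $(a\otimes b)\cdot w_{(\al,\be)}=w_{(\al,\be)}\cdot(a\otimes b)$, and bilinearity together with continuity upgrades this to $c\cdot w_{(\al,\be)}=w_{(\al,\be)}\cdot c$ for all $c\in\A\hot\B$. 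Next, evaluating on elementary tensors gives $\pi(w_{(\al,\be)})=\pi(u_\al)\otimes\pi(v_\be)$. Writing $p_\al=\pi(u_\al)$ and $q_\be=\pi(v_\be)$, left multiplication by $p_\al\otimes q_\be$ on $\A\hot\B$ is the operator $L_{p_\al}\otimes L_{q_\be}$, so the standard estimate for projective tensor products yields $\|L_{p_\al}\otimes L_{q_\be}\|\leq\|L_{p_\al}\|\,\|L_{q_\be}\|\leq K_\A K_\B$, giving the uniform bound $\|\pi(w_{(\al,\be)})\cdot c\|\leq K_\A K_\B\|c\|$.

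It then remains to show $\pi(w_{(\al,\be)})\cdot c\to c$ for all $c$. On an elementary tensor I would decompose $p_\al a\otimes q_\be b-a\otimes b=(p_\al a-a)\otimes q_\be b+a\otimes(q_\be b-b)$ and bound the two terms using $\|q_\be b\|\leq K_\B\|b\|$ together with $\|p_\al a-a\|\to0$ and $\|q_\be b-b\|\to0$ along the product net; a routine $\ep/3$ argument relying on the uniform bound $K_\A K_\B$ then passes from the dense linear span of elementary tensors to all of $\A\hot\B$. Every computation here is elementary, and the only genuinely delicate point is the bookkeeping in the centrality step: one must verify that the $\A\hot\B$-bimodule structure transfers through $\theta$ exactly so that the separate centralities of $(u_\al)$ over $\A$ and $(v_\be)$ over $\B$ combine into centrality over $\A\hot\B$. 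Once that is in hand, $(w_{(\al,\be)})$ witnesses that $\A\hot\B$ is boundedly pseudo-contractible with constant $K_\A K_\B$.
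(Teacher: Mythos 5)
Your proof is correct and is essentially the paper's own argument: your element $w_{(\al,\be)}=\theta(u_\al\otimes v_\be)$ is exactly the paper's $U_{(\al,\be)}=\sum_{i,j}(a^\al_i\otimes b^\be_j)\otimes(c^\al_i\otimes d^\be_j)$, written via the middle-flip isomorphism instead of explicit sum representations. You merely spell out the centrality, the identity $\pi(w_{(\al,\be)})=\pi(u_\al)\otimes\pi(v_\be)$, and the multiplier-bounded approximate identity estimate that the paper leaves as ``one may check.''
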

\begin{proof}
Let $(u_\al)\subset \A\hot\A$ and $(v_\be)\subset \B\hot\B$ be central approximate diagonals such that $(\pi(u_\al))$ and $(\pi(v_\be))$ are multiplier bounded approximate identities for $\A$ and $\B$, respectively. Suppose 
$u_\al = \sum_i{a^{(\al)}_i\otimes c^{(\al)}_i}$ and $v_\be = \sum_i{b^{(\be)}_i\otimes d^{(\be)}_i}$. Define
\[  U_{(\al,\be)} = \sum_{i,j}{(a^\al_i\otimes b^\be_j)\otimes (c^\al_i\otimes d^\be_j)}  \]
Then $(U_{(\al,\be)})\subset (\A\hot \B)\hot(\A\hot \B)$. One may check that $(U_{(\al,\be)})$ is a central approximate diagonal for $\A \hot \B$ and $\pi(U_{(\al,\be)}) = \pi(u_\al)\otimes \pi(v_\be)$ is a multiplier bounded approximate identity for $\A\hot \B$.
\end{proof}

\begin{question}
Is $\A\hot \B$ approximately amenable (resp. pseudo-amenable) if both $\A$ and $\B$ are?
\end{question}

\subsection{Ideals} 

Most of the hereditary properties asserted in the following theorem are easy to see and can be found in \cite{GL, GZ}.
\begin{thm} Let $\A$ be a Banach algebra and $J$ be a closed ideal of $\A$.
\begin{enumerate}
\item If $\A$ is a. a., b. a. a., seq. a. a., b. a. c., seq. a. c., ps. a., ps. c., b. ps. a., b. ps. c., seq. ps. a. or seq. ps. c., then so is $\A/J$.
\item If $\A$ is a. a., b. a. a., b. a. c., ps. a. or b. ps. a., then so is $J$ if $J$ has a bounded approximate identity.
\item If $\A$ is ps. c. (resp. b. ps. c. or seq. ps. c.), then so is $J$ if $J$ has a (resp. multiplier-bounded or sequential) central approximate identity.
\end{enumerate}
\end{thm}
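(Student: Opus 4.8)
The plan is to handle the quotient assertion (1) and the ideal assertions (2), (3) by separate devices, in each case transporting the witnessing object---a diagonal, or an approximating net for a derivation---along the relevant map. For part (1) let $q\colon \A\to\A/J$ be the quotient map, a contractive surjective homomorphism. For the diagonal-based notions (ps.\ a., ps.\ c.\ and their bounded and sequential refinements) I would push an approximate diagonal $(u_\al)\subset\A\hot\A$ forward by $q\hot q$. Since $q$ is a homomorphism, $q\hot q$ is a bimodule morphism with $\pi_{\A/J}\circ(q\hot q)=q\circ\pi_\A$, and because $q$ is onto every element of $\A/J$ is of the form $q(a)$; hence the relations $a u_\al-u_\al a\to 0$ (resp.\ $=0$) and $\pi(u_\al)a\to a$ transform verbatim into the corresponding relations for $(q\hot q)(u_\al)$. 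Contractivity of $q\hot q$ preserves any norm bound and a sequence maps to a sequence, so the bounded and sequential qualifiers survive. For the derivation-based notions I would instead pull back: a continuous derivation $D$ from $\A/J$ into a bimodule $X$ (or $X^*$) becomes, via $q$, a continuous derivation $D\circ q$ on $\A$; applying the hypothesis to $\A$ yields a net $(\xi_\al)$ with $D\circ q=\lim_\al ad_{\xi_\al}$ in the appropriate topology, and surjectivity of $q$ reads this back as $D=\lim_\al ad_{\xi_\al}$ on $\A/J$, again inheriting the qualifier.

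For the pseudo notions in part (2), fix a (bounded) approximate diagonal $(u_\al)\subset\A\hot\A$ and a bounded approximate identity $(e_\lambda)\subset J$ with $\sup_\lambda\|e_\lambda\|=:C<\infty$. Writing $u_\al=\sum_k a_k\otimes b_k$, I would compress it into $J$ by $w_{\al,\lambda}=e_\lambda\cdot u_\al\cdot e_\lambda=\sum_k e_\lambda a_k\otimes b_k e_\lambda$, which lies in $J\hot J$ since $J$ is an ideal. I would then verify, taking the limit first in $\al$ (using $c u_\al-u_\al c\to 0$ and $\pi(u_\al)c\to c$) and then in $\lambda$ (using the approximate identity), that $a w_{\al,\lambda}-w_{\al,\lambda}a\to 0$ and $\pi(w_{\al,\lambda})a=e_\lambda\pi(u_\al)e_\lambda a\to a$ for $a\in J$; a diagonal-subnet argument over the product index set then gives a genuine approximate diagonal for $J$. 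The bound $C$ keeps every quantity uniformly controlled, which is exactly what yields the b.\ ps.\ a.\ variant.

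For the approximate-amenability-type notions in (2) I would use the module-extension technique available because $J$ has a bounded approximate identity: given a derivation $D\colon J\to X^*$, reduce to the case of a pseudo-unital module, where the $J$-action extends canonically to an $\A$-action and $D$ extends to a continuous derivation $\tilde D\colon\A\to X^*$; applying the hypothesis to $\A$ and restricting the resulting approximating net to $J$ exhibits $D$ as (boundedly) approximately inner, the bounded qualifier being preserved since restriction cannot increase the operator bound. Alternatively these can be routed through the unitization: by Theorem~\ref{aa pa} the a.\ a.\ (resp.\ b.\ a.\ c.) of $\A$ is the pseudo-amenability (resp.\ bounded pseudo-amenability) of $\As$, the compression argument applied to the ideal $J\subset\As$ gives the same for $J$, and Theorem~\ref{aa psa}, using the bounded approximate identity of $J$, converts this back to a.\ a.\ (resp.\ b.\ a.\ c.) of $J$. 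For part (3) I would run the same compression $w_{\al,\lambda}=e_\lambda\cdot u_\al\cdot e_\lambda$, now with $(u_\al)$ a \emph{central} approximate diagonal and $(e_\lambda)$ a \emph{central} approximate identity of $J$; here centrality of both ingredients forces, for $a\in J$, the exact identities $a\cdot w_{\al,\lambda}=u_\al\cdot(a e_\lambda^2)$ and $w_{\al,\lambda}\cdot a=u_\al\cdot(e_\lambda^2 a)$, which coincide because $a e_\lambda^2=e_\lambda^2 a$, so the compressed net is again central, and the multiplier-bounded or sequential hypothesis on $(e_\lambda)$ delivers the matching qualifier.

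The routine part is the quotient transport in (1), which is essentially functorial. The real work, and the main obstacle, is the iterated-limit and diagonal-subnet construction in (2) and (3): one must interleave the two indices so that the compressed net simultaneously satisfies the commutation relation and $\pi(w)a\to a$, and for the bounded and sequential refinements one must keep quantitative control of every norm---this is precisely where the (multiplier-)boundedness of the approximate identity is indispensable. In part (3) the further delicate point is checking that \emph{exact} centrality, not merely approximate centrality, survives the compression, which relies on $(e_\lambda)$ being central in $J$ rather than just bounded.
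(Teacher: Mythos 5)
First, a point of reference: the paper itself gives no proof of this theorem (it is stated with the remark that the properties are easy and can be found in the cited works of Ghahramani--Loy and Ghahramani--Zhang), so you are effectively being compared with the standard arguments there. Your part (1) --- pushing diagonals forward along $q\hot q$ and pulling derivations back along $q$ --- is exactly that standard argument and is correct, including the survival of the bounded and sequential qualifiers. Your module-extension route for the a.~a.\ and b.~a.~a.\ cases of part (2) is likewise the standard Johnson-type argument and is sound for derivations into \emph{dual} modules; note, though, that b.~a.~c.\ concerns derivations into general modules, where the weak*-limit used to extend $D$ from $J$ to $\A$ is unavailable, so for that case only your alternative routing through Theorems~\ref{aa pa} and \ref{aa psa} can work.

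The genuine gap is in the compression argument for the pseudo notions of part (2): your iterated limit is taken in an order in which it fails. For $a\in J$ and $w_{\al,\lambda}=e_\lambda\cdot u_\al\cdot e_\lambda$ one has
\begin{equation*}
a w_{\al,\lambda}-w_{\al,\lambda}a \;=\; \bigl[(ae_\lambda)u_\al - u_\al(ae_\lambda)\bigr]\cdot e_\lambda \;-\; \bigl[e_\lambda u_\al - u_\al e_\lambda\bigr]\cdot(e_\lambda a)\;+\; u_\al\cdot\bigl(ae_\lambda^2 - e_\lambda^2 a\bigr).
\end{equation*}
For fixed $\lambda$ the first two terms do tend to $0$ as $\al\to\infty$, but the third does not: $ae_\lambda^2-e_\lambda^2a$ is a fixed, in general nonzero, element of $J$ (this is precisely where (2) differs from (3): the bounded approximate identity is not central), and nothing controls $u_\al\cdot d$ for fixed $d$. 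Indeed $\|u_\al\|\to\infty$ necessarily whenever $\A$ is pseudo-amenable but not amenable, since a norm-bounded approximate diagonal would make $\A$ amenable. So ``limit first in $\al$, then in $\lambda$'' breaks down, and no diagonal-subnet device can rescue an iterated limit that does not exist. The fix is to reverse the interleaving: given a finite set $F\subset J$ and $\ep>0$, first choose $\al$ so that $\|au_\al-u_\al a\|$ and $\|\pi(u_\al)a-a\|$ are small for $a\in F$, and only then choose $\lambda=\lambda(\al)$ so large that $\|ae_\lambda-e_\lambda a\|\,\|u_\al\|$ and $\|e_\lambda a-a\|$ are small; with this order all three terms above are controlled.

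The same defect undermines your boundedness claim for b.~ps.~a.: the term $u_\al\cdot(ae_\lambda^2-e_\lambda^2a)$ admits no bound of the form $K'\|a\|$ uniformly over $a\in J$, because the multiplier bound on $(u_\al)$ controls commutators $\|cu_\al-u_\al c\|$ and $\|\pi(u_\al)c\|$ but not $\|u_\al\cdot c\|$; so the compressed net is not multiplier-bounded, and the b.~ps.~a.\ (hence also the b.~a.~c.) case is not delivered by ``the bound $C$ keeps every quantity uniformly controlled'' --- it needs a genuinely different treatment. A smaller wrinkle of the same nature occurs in part (3): your verification of $\pi(w_{\al,\lambda})a\to a$ needs $e_\lambda^2a\to a$, which is automatic when the central approximate identity is multiplier-bounded (or sequential, by uniform boundedness), but not for a general, possibly unbounded, central approximate identity; there one should compress with two independent indices, $e_\mu\cdot u_\al\cdot e_\lambda$ with $\mu$ chosen after $\lambda$ and $\al$ last --- exact centrality is preserved exactly as you observe, and that observation is the correct and essential point of part (3).
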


\begin{question}
If there is a Banach algebra homomorphism $T$: $\A\to\B$ such that $T(\A)$ is dense in $\B$, and if $\A$ is approximately amenable (resp. pseudo-amenable etc.), is $\B$ approximately amenable (resp. pseudo-amenable etc.)?
\end{question}

The existence of approximate identities for ideals of a generalized amenable Banach algebras is an attractive topic.

\begin{thm}[\cite{GSZ}]\label{codi 1}
Let $\A$ be a boundedly approximately contractible. If $J$ is a closed ideal of $\A$ of codimension 1. Then $J$ has a b.a.i..
\end{thm}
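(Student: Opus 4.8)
The plan is to first pin down the structure of $J$, then manufacture approximate identities for $J$ by slicing a bounded approximate diagonal of $\A$ with the character that defines $J$, and finally to upgrade these to a genuine bounded approximate identity; the upgrading is where I expect the real work to lie.

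First I would reduce to the case $J=\ker\chi$ for a character $\chi$. Since $\A$ is boundedly approximately contractible, it has a bounded approximate identity by Theorem~\ref{a.i.}(2), and the quotient $\A/J$ is again boundedly approximately contractible, hence also has a bounded approximate identity. As $\A/J$ is one dimensional, the only possibility is $\A/J\cong\C$ with its usual unital product, so $J=\ker\chi$ for a nonzero character $\chi$ on $\A$; fixing $a_0\in\A$ with $\chi(a_0)=1$ gives the splitting $\A=J\oplus\C a_0$.

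Next I would bring in the diagonal. By Theorem~\ref{aa psa}(2), $\A$ is boundedly pseudo-amenable and has a bounded approximate identity $(e_\lambda)$; let $(u_i)\subset\A\hot\A$ be a bounded approximate diagonal, so $\|au_i-u_ia\|\le K\|a\|$ with $au_i-u_ia\to0$, and $(\pi(u_i))$ is a two-sided multiplier bounded approximate identity for $\A$ (indeed $\|a\pi(u_i)\|\le 2K\|a\|$ and $a\pi(u_i)\to a$). Applying the slice maps $\chi\otimes\mathrm{id}$ and $\mathrm{id}\otimes\chi$ to $au_i-u_ia\to0$ produces $v_i,w_i\in\A$ with $\chi(v_i),\chi(w_i)\to1$ and, for $a\in J$, $v_ia\to0$ and $aw_i\to0$ (each bounded by $K\|a\|$). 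Combining with $(e_\lambda)$, the nets $e_\lambda-\chi(e_\lambda)v_i$ and $e_\lambda-\chi(e_\lambda)w_i$ lie in $J$ and serve, respectively, as a left and a right approximate identity for $J$: the whole point of slicing is that the character annihilates the contributions that prevent a naive modification of $(e_\lambda)$ from working on $J$.

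The main obstacle is boundedness. The diagonal $(u_i)$, and hence the sliced elements, need not be norm bounded, so a priori one only gets \emph{multiplier} bounded one-sided approximate identities for $J$. Here I would exploit that $\pi(u_i)$ is multiplier bounded on \emph{all} of $\A$: the element $m_i:=\pi(u_i)-\chi(\pi(u_i))\,a_0\in J$ then satisfies $\|am_i\|\le K_1\|a\|$ for every $a\in\A$, so that $\|m_i\|=\lim_\lambda\|e_\lambda m_i\|\le K_1\sup_\lambda\|e_\lambda\|$ is genuinely norm bounded. This is exactly the upgrading device used in the proof of Theorem~\ref{aa psa}(2), where a multiplier bounded one-sided approximate identity is converted into a norm bounded one against the bounded approximate identity of the ambient algebra. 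The hard part, which I expect to be the technical heart, is to carry this upgrade through for $J$: the norm bounded elements $m_i$ act on $J$ only up to the residual defect $a\mapsto aa_0$ (respectively $a\mapsto a_0a$), while the \emph{defect-free} identities built from $v_i,w_i$ are only multiplier bounded. One must reconcile these — eliminating the $a_0$-defect while keeping the norm bound — using the commutator estimate $\|au_i-u_ia\|\le K\|a\|$ together with the essentiality $J=\overline{\A J}=\overline{J\A}$ forced by the approximate identity of $\A$. Once a norm bounded left and a norm bounded right approximate identity for $J$ are secured, the standard fact that these combine to a two-sided bounded approximate identity completes the argument.
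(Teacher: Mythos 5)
Your reduction to $J=\ker\chi$, the slicing of the bounded approximate diagonal, and the norm-bounding trick $\|m_i\|=\lim_\lambda\|e_\lambda m_i\|$ are all sound (one trivial repair: $e_\lambda-\chi(e_\lambda)v_i$ does not actually lie in $J$, since $\chi(v_i)\to 1$ but $\chi(v_i)\neq 1$; you must normalize, say replace $v_i$ by $v_i/\chi(v_i)$). But the proposal stops exactly where the theorem begins. What you construct is two families of nets in $J$: (A) the sliced nets built from $v_i,w_i$, which are genuine one-sided approximate identities for $J$ but only multiplier bounded, and (B) the nets $m_i=\pi(u_i)-\chi(\pi(u_i))a_0$, which are norm bounded but whose right (resp.\ left) multiplication operators converge on $J$ to $\mathrm{id}-R_{a_0}$ (resp.\ $\mathrm{id}-L_{a_0}$), not to $\mathrm{id}$. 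The ``reconciliation'' you defer is not a technical detail; the two requirements pull in opposite directions. Your norm bound on $m_i$ exists precisely because $m_i$ acts boundedly on \emph{all} of $\A$, in particular on the $e_\lambda$ (which satisfy $\chi(e_\lambda)\to 1$), and it is exactly the corrector term $\chi(\pi(u_i))a_0$ --- the source of the defect --- that makes this possible. If you instead kill the defect by replacing $a_0$ with the normalized slice $\tilde w_j=w_j/\chi(w_j)$, the element $\pi(u_i)-\chi(\pi(u_i))\tilde w_j$ acts boundedly on $J$ but no longer on the $e_\lambda$: up to a $K$-bounded error one has $e_\lambda\tilde w_j\approx\chi(e_\lambda)\tilde w_j$, and $\|\tilde w_j\|$ is unbounded, so the limiting trick collapses and no norm bound survives. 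Nothing in the tools you name (the commutator estimate plus essentiality of $J$) resolves this: for instance, iterating the $m_i$ only yields strong limits of the operators $(\mathrm{id}-R_{a_0})^n$ on $J$, which converge to $\mathrm{id}$ only given information about the powers $a_0^n$ that you do not have.

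The deeper problem is that the upgrading device you invoke is not available for $J$. In the paper's proof of Theorem~\ref{aa psa}(2), the elements $F_n,G_n$ are multiplier bounded as multipliers on the whole algebra $\A$, and $\A$ is independently known to have a bounded approximate identity; that is why $\|F_n\|=\lim_\alpha\|e_\alpha F_n\|$ gives a norm bound. Here your defect-free elements are multiplier bounded only on $J$, and a bounded approximate identity for $J$ is precisely what you are trying to prove --- using it would be circular. Nor can you appeal to a general principle that multiplier-bounded approximate identities upgrade to bounded ones: no such principle holds (this is the territory of the paper's Question~5), and the known upgrade result \cite[Theorem~3.3]{CGZ} requires the algebra being upgraded to be itself boundedly approximately amenable, which is not known for $J$ (ideals inherit that property only when they already have a bounded approximate identity). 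So the decisive step --- manufacturing a net in $J$ that is simultaneously norm bounded and defect free --- is missing; that step is the actual content of the theorem proved in \cite{GSZ}, not a routine verification.
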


We note, unlike amenable case, the above result is false if $J$ is merely a complemented closed ideal of $\A$. A counter-example was given in \cite{GLZ}.

\begin{question}
Does Theorem~\ref{codi 1} still hold if $J$ is a finite codimensional ideal of $\A$?
\end{question}

This is true if $\A$ is abelian, since in the case $A/I$ is a finite dimensional contractible abelian algebra. There are finite 1-codimensional ideals $I_i$ of $\A$, $i=1,2,\cdots, n$, such that $I=\cap_{i=1}^n I_i$.

For approximate or pseudo amenability, we only know some results ensuring one-sided approximate identities for ideals.

\begin{thm} Let $\A$ be a Banach algebra and $J$ be a closed left (resp. right) ideal of $\A$.
\begin{enumerate}
\item If $\A$ is approximately amenable and $J$ is weakly complemented in $\A$, then $J$ has a right (resp. left) approximate identity \cite{GL}.
\item If $\A$ is pseudo-amenable and $J$ is boundedly approximately complemented in $\A$, then $J$ has a right (resp. left) approximate identity; if $\A$ is pseudo-contractible and $J$ is approximately complemented in $\A$, then $J$ has a right (resp. left) approximate identity \cite{GZ}.
\end{enumerate}
\end{thm}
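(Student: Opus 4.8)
The plan is to handle all three assertions by a single device: an (approximate) diagonal supplies a right approximate identity for the \emph{ambient} algebra, and the complementation hypothesis is then used to transport it into $J$. Since $J$ is a left ideal, $\A\cdot J\subseteq J$, so it is the \emph{right} leg of a diagonal that can be driven into $J$ and kept there, which is exactly why a left ideal should produce a \emph{right} approximate identity. For part~(1) I first invoke Theorem~\ref{aa pa}(1): approximate amenability makes $\As$ pseudo-amenable, so I fix an approximate diagonal $(u_\be)\subset\As\hot\As$ with $\pi(u_\be)=e$ and $au_\be-u_\be a\to0$. Writing $u_\be=\sum_i a_i\otimes b_i$ and applying $\mathrm{id}\otimes\ep$ to $au_\be-u_\be a\to0$ shows that $r_\be:=e-(\mathrm{id}\otimes\ep)(u_\be)\in\A$ satisfies $a r_\be\to a$ for all $a\in\A$; in particular $jr_\be\to j$ for $j\in J$. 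Thus $\A$ already has a right approximate identity, and the whole difficulty is that $r_\be\notin J$.

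Second, I would use approximate amenability a \emph{second} time in order to move $r_\be$ into $J$. Weak complementation says precisely that the inclusion $\iota\colon J^{\perp}\hookrightarrow\A^*$ of right $\A$-modules has a bounded linear left inverse, i.e. the admissible sequence $0\to J^{\perp}\to\A^*\to J^*\to0$ splits linearly. Applying the right-handed form of Theorem~\ref{module}(2) (equivalently, the theorem over $\A^{op}$, which is again approximately amenable) upgrades this to a net of projections $P_\al\colon\A^*\to J^{\perp}$, each a left inverse of $\iota$, with $\|a\cdot P_\al-P_\al\cdot a\|\to0$. Dualizing, $Q_\al^*:=(\mathrm{id}-P_\al)^*$ is a net of projections of $\A^{**}$ onto $J^{\perp\perp}=J^{**}$ that are \emph{asymptotically} module maps and restrict to the identity on $J^{**}$. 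Setting $E_{\al,\be}:=Q_\al^*(\widehat{r_\be})\in J^{**}$, I would show $j\cdot E_{\al,\be}\to\widehat{j}$ in the weak* topology for each $j\in J$ by an iterated limit: for fixed $\be$ the commutator error $\|j\cdot Q_\al^*-Q_\al^*\cdot j\|\,\|r_\be\|\to0$ in $\al$, while $Q_\al^*$ fixes $\widehat{jr_\be}\in J^{**}$ and $jr_\be\to j$. A standard Goldstine plus Mazur (convexity) argument then converts this weak* right identity in $J^{**}$ into an honest right approximate identity lying in $J$.

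For part~(2) only the diagonal is available: pseudo-amenability gives $(u_\be)\subset\A\hot\A$ with $\pi(u_\be)$ a two-sided approximate identity for $\A$, but it does \emph{not} supply Theorem~\ref{module}, so the complementing projection cannot be made asymptotically central and the commutator must instead be absorbed by the diagonal itself. With the primal projections $P_\nu\colon\A\to J$ I would set $e_{\be,\nu}:=\pi\big((\mathrm{id}\otimes P_\nu)u_\be\big)\in\A\cdot J\subseteq J$ and estimate, for $j\in J$, that $j\,e_{\be,\nu}$ differs from $\pi\big((\mathrm{id}\otimes P_\nu)(u_\be j)\big)$ by at most $\|P_\nu\|\,\|ju_\be-u_\be j\|$, the latter expression in turn converging to $\pi(u_\be j)=\pi(u_\be)j\to j$. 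Here the \emph{uniform} bound on $\|P_\nu\|$ is exactly what neutralizes the approximate commutator, which explains the hypothesis of \emph{bounded} approximate complementation in the pseudo-amenable case; for pseudo-contractibility the diagonal is genuinely central, $ju_\be=u_\be j$, so that first difference vanishes identically and mere approximate complementation suffices. In every case the main obstacle is precisely this neutralization of the commutator created by the complementing projection's failure to be a module morphism---accomplished in~(1) by upgrading the projection through approximate amenability, and in~(2) by the boundedness or exact centrality of the diagonal---together, in~(1), with the delicate passage from the weak* right identity in $J^{**}$ back to a genuine net inside $J$.
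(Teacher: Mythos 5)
A preliminary remark: the paper states this theorem without proof, citing \cite{GL} for part (1) and \cite{GZ} for part (2), so your proposal has to be measured against the arguments in those papers.

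Your part (2) is essentially the argument of \cite{GZ} and is correct: since $J$ is a left ideal, $\pi\bigl((\mathrm{id}\otimes P_\nu)u_\be\bigr)\in\A\cdot J\subseteq J$, and the error splits into the commutator term $\|P_\nu\|\,\|ju_\be-u_\be j\|$, the term $\bigl\|\pi\bigl((\mathrm{id}\otimes P_\nu)(u_\be j)\bigr)-\pi(u_\be)j\bigr\|$, and $\|\pi(u_\be)j-j\|$; the iterated limit is legitimate in the order ``$\be$ first, then $\nu$'' precisely because the commutator bound is uniform in $\nu$ (boundedness) or identically zero (centrality). One point deserves care: the middle term asks $\mathrm{id}\otimes P_\nu$ to converge to the identity on the \emph{infinite} sum $u_\be j=\sum_i a_i\otimes b_ij$, and pointwise convergence $P_\nu c\to c$ ($c\in J$) does not give this when $(P_\nu)$ is unbounded, which is the situation in the pseudo-contractible half. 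This is exactly why approximate complementation is defined in \cite{ZHA, GZ} as convergence to the identity \emph{uniformly on compact subsets} of $J$; with that definition your step goes through.

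Part (1), however, has a genuine gap. For a closed \emph{left} ideal one has $\A J\subseteq J$ but not $J\A\subseteq J$, so $jr_\be\in J\A$ need not lie in $J$, and your key claim that ``$Q_\al^*$ fixes $\widehat{jr_\be}\in J^{**}$'' is false; it is available only for two-sided ideals. The obvious repair is to use instead that $Q_\al^*$ fixes $\hat{j}$ and write $Q_\al^*(\widehat{jr_\be})=Q_\al^*(\widehat{jr_\be}-\hat{j})+\hat{j}$, but this introduces the error $\|Q_\al^*\|\,\|jr_\be-j\|$, and then the double limit converges in \emph{neither} order: the net $(P_\al)$ produced by Theorem~\ref{module} satisfies only $\|P_\al\|\le\|P\|\,\|u_\al\|$, and the approximate diagonal $(u_\al)$ of $\As$ is necessarily unbounded unless $\A$ is amenable, so for fixed $\be$ the term $\|Q_\al^*\|\,\|jr_\be-j\|$ need not tend to $0$ in $\al$; while for fixed $\al$ the commutator term $\|j\cdot Q_\al^*-Q_\al^*\cdot j\|\,\|r_\be\|$ need not tend to $0$ in $\be$, because $(r_\be)$ cannot be taken bounded either---whether an approximately amenable algebra even has a bounded approximate identity is precisely Question~\ref{tai} of this paper. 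Two unbounded nets cannot be played off against each other, so the weak* convergence $j\cdot E_{\al,\be}\to\hat{j}$ is not established.

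The repair (in effect how \cite{GL} argue) is to delete the net $(r_\be)$ altogether and use the honest identity of the unitization. Note $J$ is still a closed left ideal of $\As$ and is weakly complemented there: if $P\colon\A^*\to J^\perp$ is a bounded projection, then $f+c\ep\mapsto Pf+c\ep$ is one on $(\As)^*$, where $\ep$ is the character annihilating $\A$. Run your Theorem~\ref{module} argument over $(\As)^{op}$ to get left inverses $P_\al$ of $\iota\colon J^\perp\to(\As)^*$ with the asymptotic commuting property, put $Q_\al=\mathrm{id}-\iota P_\al$ and $E_\al:=Q_\al^*(\hat{e})\in J^{\perp\perp}$. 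Since $j\cdot\hat{e}=\hat{j}$ \emph{exactly} and $Q_\al^*\hat{j}=\hat{j}$,
\[
\|j\cdot E_\al-\hat{j}\|=\bigl\|(j\cdot Q_\al^*-Q_\al^*\cdot j)(\hat{e})\bigr\|\le\|j\cdot Q_\al^*-Q_\al^*\cdot j\|\longrightarrow 0,
\]
a single net with no competing error term. Your Goldstine--Mazur step, which is fine as stated and needs only this kind of smallness, then produces the right approximate identity inside $J$; and your opening construction of $(r_\be)$ via $\mathrm{id}\otimes\ep$, though correct, becomes unnecessary.
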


If $J$ is a two-sided ideal of $\A$, then it has both right and left approximate identities under the condition of the above theorem. There is no clue whether it has a two-sided approximate identity.

\begin{question}
Let $\A$ be pseudo-amenable or be approximately amenable with an approximate identity. When does a closed ideal of $\A$ have  a two-sided approximate identity?
\end{question}

\section{Generalized amenability of classical Banach algebras}\label{concrete}

\subsection{Algebras associated to locally compact groups}

There is no difference between generalized amenability and amenability for group algebras.

\begin{thm}[\cite{GL,GZ}]
Let $G$ be a locally compact group. Then
\begin{enumerate}
\item the group algebra $L^1(G)$ is approximately amenable or pseudo-amenable if and only if it is amenable. 
\item the measure algebra $M(G)$ is approximately amenable or pseudo-amenable if and only if $G$ is discrete and amenable.
\item the second dual algebra $L^1(G)^{**}$ is approximately amenable or pseudo-amenable if and only if $G$ is a finite group.
\end{enumerate}
\end{thm}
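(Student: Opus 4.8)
The plan is to prove the three ``only if'' implications; the ``if'' directions are immediate, since an amenable Banach algebra is approximately amenable by the chain (\ref{reduced}) and is pseudo-amenable because a bounded approximate diagonal is in particular an approximate diagonal. The first observation is that each of the three algebras has a bounded approximate identity: $L^1(G)$ has one classically, $M(G)$ is unital with identity $\delta_e$, and $L^1(G)^{**}$ inherits one from $L^1(G)$ under either Arens product. Hence, by Theorem~\ref{aa psa}(1), on each of these algebras approximate amenability and pseudo-amenability coincide, and it suffices to treat approximate amenability. The strategy is then, in each case, to pass from approximate amenability to the stated condition on $G$ and to close the loop by the classical amenability characterizations (Johnson for $L^1(G)$; Dales--Ghahramani--Helemskii for $M(G)$; Ghahramani--Loy--Willis for $L^1(G)^{**}$), which turn the group condition back into amenability of the algebra.

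For part (1) I would argue directly with an approximate diagonal. Identifying $L^1(G)\hot L^1(G)$ with $L^1(G\times G)$, let $(u_\al)$ be an approximate diagonal and let $\omega\colon L^1(G)\to\C$ be the augmentation character $\omega(f)=\int_G f$. Applying the slice map $\mathrm{id}\otimes\omega$ to $u_\al$ produces $m_\al\in L^1(G)$, and a short computation from $f\cdot u_\al-u_\al\cdot f\to 0$ and $\pi(u_\al)\cdot f\to f$ gives
\[ f*m_\al-\omega(f)m_\al\to 0 \quad (f\in L^1(G)), \qquad \omega(m_\al)\to 1. \]
After normalising so that $\omega(m_\al)=1$, I would pass to absolute values and set $n_\al=|m_\al|/\|m_\al\|_1$. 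Using the pointwise inequality $\bigl|\,|f*m_\al|-|m_\al|\,\bigr|\le|f*m_\al-m_\al|$ together with the fact that convolution of a probability density with a nonnegative function preserves the $L^1$-norm, one checks that $\|f*n_\al-n_\al\|_1\to 0$ for every probability density $f$. This is precisely Reiter's condition $(P_1)$, so $G$ is amenable. The point worth emphasising is that positivity and norm-boundedness of the invariant net---which are exactly what amenability of $G$ requires and which the raw (possibly unbounded, signed) diagonal does not supply---are recovered by this absolute-value normalisation.

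For parts (2) and (3) the amenability of $G$ comes cheaply: $L^1(G)$ embeds as a closed ideal with a bounded approximate identity in both $M(G)$ (as $M_a(G)$) and $L^1(G)^{**}$, so by the hereditary properties (an ideal with a bounded approximate identity inherits approximate amenability, and quotients of approximately amenable algebras are approximately amenable) $L^1(G)$ is itself approximately amenable, whence $G$ is amenable by part (1). The real work is to force $G$ to be discrete (for $M(G)$) and finite (for $L^1(G)^{**}$). Here the soft argument of the previous paragraph is of no use---indeed any compact group carries a genuinely invariant probability measure, so the slice-and-normalise construction gives no obstruction at all---and one must instead exploit a structural obstruction. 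The cleanest mechanism is that approximate amenability forces every continuous point derivation to vanish: the only inner derivation into a one-dimensional module $\C_\phi$ over a character $\phi$ is the zero map, so every approximately inner one is zero as well. Thus it suffices to produce, for non-discrete $G$, a nonzero continuous point derivation on $M(G)$ (this is the content of the Dales--Ghahramani--Helemskii obstruction built from continuous measures), and likewise, for infinite $G$, an analogous derivation or a non-approximately-amenable subquotient of $L^1(G)^{**}$ (the Ghahramani--Loy--Willis obstruction). I expect this last step---recasting the DGH and GLW non-amenability obstructions as incompatibilities with approximate amenability---to be the main difficulty, since it is exactly the place where the approximate theory genuinely diverges from a routine adaptation of the classical invariant-mean arguments.
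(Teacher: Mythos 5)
A caveat first: this survey only quotes the theorem from \cite{GL} and \cite{GZ} and gives no proof, so your proposal has to be measured against the arguments in those sources. Parts (1) and (2) of your proposal are essentially correct. In (1), the slice-map argument is sound and complete: with $m_\al=(\mathrm{id}\otimes\omega)(u_\al)$ one gets $f*m_\al-\omega(f)m_\al\to0$ and $\omega(m_\al)=\omega(\pi(u_\al))\to1$, and the absolute-value step really does survive the unboundedness of $(m_\al)$, since for a probability density $f$
\[
\bigl\|f*|m_\al|-|m_\al|\bigr\|_1\le\bigl(\|m_\al\|_1-\|f*m_\al\|_1\bigr)+\bigl\||f*m_\al|-|m_\al|\bigr\|_1\le 2\,\|f*m_\al-m_\al\|_1,
\]
while $\|m_\al\|_1\ge|\omega(m_\al)|=1$ after normalization; this yields Reiter's condition. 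That is a genuinely different route from \cite{GL}, which adapts Johnson's derivation-into-$L^\infty(G)^*$ argument to manufacture an invariant mean; your diagonal argument treats approximate and pseudo-amenability at one stroke, legitimately, because $L^1(G)$ has a b.a.i.\ and Theorem~\ref{aa psa}(1) applies. Part (2) is exactly the published route: $M(G)$ is unital, $L^1(G)$ is a closed ideal with a b.a.i.\ (heredity gives amenability of $G$), approximately inner derivations into one-dimensional modules vanish, and Dales--Ghahramani--Helemskii construct precisely a nonzero continuous point derivation on $M(G)$ when $G$ is non-discrete; contrary to your closing worry, no ``recasting'' is needed there.

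Part (3) is where the proposal breaks down, for three concrete reasons. (i) $L^1(G)^{**}$ does \emph{not} inherit a bounded approximate identity from $L^1(G)$: a weak* cluster point of a b.a.i.\ is only a right identity for the first Arens product (a left identity for the second). Worse, if $G$ is not discrete, then every $n$ in the nonzero space $LUC(G)^\perp\subset L^1(G)^{**}$ satisfies $mn=0$ for all $m\in L^1(G)^{**}$ (first Arens product), because $\phi\cdot a$ is uniformly continuous for every $\phi\in L^\infty(G)$ and $a\in L^1(G)$; hence $L^1(G)^{**}$ then has no left approximate identity at all, so certainly no b.a.i., and your appeal to Theorem~\ref{aa psa}(1) to merge the two notions for $L^1(G)^{**}$ is unjustified. (ii) $L^1(G)$ is a closed ideal of $L^1(G)^{**}$ if and only if $G$ is compact, so your heredity argument for amenability of $G$ is available only when amenability of $G$ is automatic, and fails exactly when it is needed; the correct substitutes are the result in \cite{GL} that approximate amenability of $\A^{**}$ implies that of $\A$ (proved by extending derivations to the bidual, not by ideal heredity), or, once discreteness is known, the quotient $\ell^1(G)\cong\ell^1(G)^{**}/c_0(G)^\perp$. (iii) The finiteness step is not a point-derivation phenomenon. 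The actual structure is: approximate or pseudo-amenability forces a left approximate identity (Theorem~\ref{a.i.}(1), resp.\ the remark at the start of Section~3.1), so $G$ is discrete by the annihilator observation in (i); then the quotient $\ell^1(G)$ is approximately amenable, so $G$ is amenable by part (1); and the remaining case of an infinite amenable discrete group is excluded by a substantive structural argument concerning the ideal $c_0(G)^\perp$, going back to Ghahramani--Loy--Willis for amenability and adapted in \cite{GL} --- exactly the step your proposal leaves open, and no point derivation on $\ell^1(G)^{**}$ is known to do that job. So parts (1)--(2) stand, but part (3) needs to be rebuilt along these lines.
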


Consider the Fourier algebras $A(G)$. It is well known that $A(G)$ is not necessarily amenable if $G$ is an amenable group, even $G$ is compact. Amenability of $A(G)$ has recently been characterized in \cite{LLW, FR}. There are pseudo-amenable but not approximate amenable Fourier algebras (e.g. $A(\mathbb F_2)$ \cite{CGZ}), and there are approximately amenable but not amenable Fourier algebras (e.g. on some amenable discrete groups).

\begin{thm}[\cite{GS}]
If $G$ has an open abelian subgroup, then
\begin{enumerate}
\item  $A(G)$ is pseudo-amenable if and only if it has an approximate identity.
\item $A(G)$ is approximately amenable if $G$ is amenable.
\end{enumerate}
\end{thm}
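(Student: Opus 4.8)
The plan is to separate the two parts and reduce part~(2) to part~(1). In part~(1) the forward implication is immediate, since (as noted in Section~3) every pseudo-amenable Banach algebra has a two-sided approximate identity, so a pseudo-amenable $A(G)$ certainly has one. For part~(2), if $G$ is amenable then Leptin's theorem gives $A(G)$ a bounded approximate identity; part~(1) then makes $A(G)$ pseudo-amenable, and Theorem~\ref{aa psa}(1) upgrades this to approximate amenability because a bounded approximate identity is present. Thus everything comes down to the reverse implication in part~(1): assuming $G$ has an open abelian subgroup $H$ and that $A(G)$ has an approximate identity $(e_\beta)$, I want to build an approximate diagonal for $A(G)$.

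First I record the ingredients. Since $\widehat H$ is abelian, hence amenable, $A(H)\cong L^1(\widehat H)$ is amenable and so carries a bounded approximate diagonal $(d_\nu)\subset A(H)\hot A(H)$ whose images $\pi(d_\nu)$ form a bounded approximate identity for $A(H)$. Writing $G=\bigsqcup_\alpha g_\alpha H$, each coset $g_\alpha H$ is clopen; the translation $\theta_\alpha=L_{g_\alpha}$ is an isometric algebra isomorphism of $A(H)$ (extended by zero, using that $H$ is open) onto the closed ideal $I_\alpha=\{u\in A(G):\operatorname{supp}u\subseteq g_\alpha H\}$, and $I_\alpha I_{\alpha'}=\{0\}$ for $\alpha\neq\alpha'$. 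Transporting $(d_\nu)$ by $\theta_\alpha\otimes\theta_\alpha$ yields, uniformly in $\alpha$, a bounded approximate diagonal $d^{(\alpha)}_\nu$ for $I_\alpha$. Writing $P_\alpha u=\mathbf 1_{g_\alpha H}u$ for the contractive projection of $A(G)$ onto $I_\alpha$, I then fix, for each $\beta$, a compactly supported $\tilde e_\beta\in A(G)$ with $\|\tilde e_\beta-e_\beta\|$ small (such functions are dense in $A(G)$); its support meets only finitely many cosets, say those in a finite set $F_\beta$, and I set $U_{\beta,\nu}=\sum_{\alpha\in F_\beta}(P_\alpha\tilde e_\beta)\cdot d^{(\alpha)}_\nu\in A(G)\hot A(G)$. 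A short computation using commutativity and the fact that $a$ acts on the $I_\alpha$-component only through $P_\alpha a$ gives $a\cdot U_{\beta,\nu}-U_{\beta,\nu}\cdot a=\sum_{\alpha\in F_\beta}(P_\alpha\tilde e_\beta)\cdot\big[(P_\alpha a)\cdot d^{(\alpha)}_\nu-d^{(\alpha)}_\nu\cdot(P_\alpha a)\big]$, a finite sum of terms tending to $0$ in $\nu$; while $\pi(U_{\beta,\nu})=\sum_{\alpha\in F_\beta}(P_\alpha\tilde e_\beta)\,\pi(d^{(\alpha)}_\nu)\to\sum_{\alpha\in F_\beta}P_\alpha\tilde e_\beta=\tilde e_\beta$, so $\pi(U_{\beta,\nu})\,a\to\tilde e_\beta a$, and $\tilde e_\beta a\to a$ as $\beta$ advances. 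An iterated-limit (diagonal net) argument over $(\beta,\nu)$ then assembles the $U_{\beta,\nu}$ into an approximate diagonal for $A(G)$.

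The point at which care is essential---and where the approximate-identity hypothesis does genuine work---is the summation over cosets. One is tempted to present $A(G)$ as a $c_0$- or $\ell^p$-direct sum of the amenable ideals $I_\alpha$ and quote the direct-sum theorem of Section~2, but the norm of $A(G)$ is not of that kind: the partial projections $\sum_{\alpha\in F}P_\alpha$ need not be uniformly bounded, so $\sum_\alpha P_\alpha a$ may fail to converge for a general $a\in A(G)$, and without an approximate identity $A(G)$ need not be pseudo-amenable at all. The construction above sidesteps this: by seeding each $U_{\beta,\nu}$ with a compactly supported approximant of $e_\beta$, every $U_{\beta,\nu}$ is a \emph{finite} sum over cosets, so no uncontrolled infinite summation occurs, and the coverage of all of $A(G)$ is furnished by the approximate identity rather than by any unconditional coset decomposition. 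The only remaining labor is to verify both approximate-diagonal conditions simultaneously, and uniformly enough to extract a single net, which is routine. (Alternatively one might try to show that $A(G)$ is approximately biprojective under the same hypothesis and invoke the result of Section~2 that approximate biprojectivity together with an approximate identity implies pseudo-amenability; but the explicit construction seems the most transparent.)
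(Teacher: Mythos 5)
The survey you are working against states this theorem without proof---it is quoted directly from Ghahramani and Stokke \cite{GS}---so there is no internal argument to compare yours with; your proposal has to stand on its own, and it does. The reduction of part (2) to part (1) via Leptin's theorem and Theorem~\ref{aa psa}(1) is valid, and your construction for the reverse implication in part (1) is sound: the facts you invoke are all standard (extension by zero is an isometric isomorphism of $A(H)$ onto $\{u\in A(G):\operatorname{supp}u\subseteq H\}$ when $H$ is open; $1_{g_\alpha H}$ is positive definite of norm one in $B(G)$, so each $P_\alpha$ is a contractive projection onto the ideal $I_\alpha$; compactly supported functions are dense in $A(G)$), the commutator identity you state follows from commutativity and the fact that the legs of $d^{(\alpha)}_\nu$ lie in $I_\alpha$, and the iterated-limit assembly over $(\beta,\nu)$ is routine. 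This is essentially the strategy of the original source \cite{GS}: transport the bounded approximate diagonal of $A(H)\cong L^1(\widehat H)$ to the coset ideals and truncate against compactly supported approximants of the approximate identity. Your cautionary remark is also well placed: $A(G)$ is not an $\ell^p$- or $c_0$-sum of the $I_\alpha$, so the direct-sum theorem of Section 2 cannot be quoted, and the approximate identity hypothesis is exactly what replaces the missing unconditional coset decomposition.
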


\begin{question}
How to characterize pseudo-amenability and approximate amenability for $A(G)$?
\end{question}
Since pseudo-amenability and approximate amenability both imply weak amenability for $A(G)$, answers to this question may shed light on the investigation of weak amenability of $A(G)$.

A nontrivial Segal algebra is never amenable since it has no bounded approximate identity. In fact, from Theorem~\ref{a.i.}(2) it is never boundedly approximately contractible. But it can be pseudo-amenable and pseudo-contractible.

\begin{thm}[\cite{GZ, CGZ}]
Let $S^1(G)$ be a Segal algebra on a locally compact group $G$.
\begin{enumerate}
\item $S^1(G)$ is pseudo-contractible if and only if $G$ is a compact group.
\item If $S^1(G)$ is pseudo-amenable or approximately amenable, then $G$ is an amenable group.(see also \cite{S-S-S})
\item If $G$ is an amenable SIN-group, then $S^1(G)$ is pseudo-amenable.
\end{enumerate}
\end{thm}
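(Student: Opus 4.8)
The plan is to handle all three equivalences through a single device: transporting a diagonal-type net for $L^1(G)$ into $\Seg\hot\Seg$ by ``cutting it down'' with a central approximate identity, using that $\Seg$ is an essential Banach $L^1(G)$-bimodule with $\|g*f*h\|_S\le\|g\|_1\|f\|_S\|h\|_1$. Suppose $(e_\al)\subset\Seg$ is a \emph{central} approximate identity (so $a*e_\al=e_\al*a$, $\|e_\al\|_1\le C$, and $e_\al*f\to f$ in $\|\cdot\|_S$), and let $(d_\be)\subset L^1(G)\hot L^1(G)$ be a diagonal for $L^1(G)$. Writing $d_\be=\sum_k p_k\otimes q_k$, I set
\[
 u_{(\al,\be)}=\sum_k (e_\al * p_k)\otimes(q_k*e_\al)\in\Seg\hot\Seg .
\]
The assignment $\Theta_\al\colon r\otimes s\mapsto (e_\al*r)\otimes(s*e_\al)$ is a bounded $L^1(G)$-bimodule morphism into $\Seg\hot\Seg$, and centrality of $e_\al$ gives $a\cdot u_{(\al,\be)}-u_{(\al,\be)}\cdot a=\Theta_\al(a\cdot d_\be-d_\be\cdot a)$, while $\pi(u_{(\al,\be)})*a=e_\al*\pi(d_\be)*e_\al*a$. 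I would then verify, for fixed $\al$ and letting $\be$ run, that $a\cdot u_{(\al,\be)}-u_{(\al,\be)}\cdot a\to0$ and $\pi(u_{(\al,\be)})*a\to a$ in $\|\cdot\|_S$ (the latter because $(\pi(d_\be))$ is a bounded left approximate identity of $L^1(G)$ acting on the essential module $\Seg$), and pass to a diagonal net.

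For the converse of (1) and for (3) this machine applies directly. Compactness forces $G$ to be SIN, so a central $(e_\al)$ exists, and it makes $L^1(G)$ \emph{pseudo-contractible}: $L^1(G)$ is biprojective for compact $G$, hence approximately biprojective, and the characterization of pseudo-contractibility as approximate biprojectivity together with a central approximate identity then applies. Taking $(d_\be)$ to be a \emph{central} approximate diagonal for $L^1(G)$, the element $u_{(\al,\be)}$ is itself central, yielding a central approximate diagonal for $\Seg$; so $\Seg$ is pseudo-contractible. For (3), SIN again supplies the central $(e_\al)$, while amenability of $G$ supplies a \emph{bounded} approximate diagonal $(d_\be)$ for $L^1(G)$; the same cut-down produces an (in general non-central) approximate diagonal for $\Seg$, whence $\Seg$ is pseudo-amenable.

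For part (2) the subtlety is that one cannot simply push pseudo-amenability of $\Seg$ forward to $L^1(G)$ along the dense inclusion $\Seg\hookrightarrow L^1(G)$: the diagonal is unbounded in $\|\cdot\|_S$, and legitimacy of such transport is precisely the open question of whether pseudo-amenability passes along a dense-range homomorphism. Instead I would read off amenability of $G$ directly. Let $(u_i)\subset\Seg\hot\Seg$ be an approximate diagonal and $\phi\colon f\mapsto\int_G f$ the augmentation character. Contracting the second leg, $w_i:=(\mathrm{id}\otimes\phi)(u_i)\in\Seg$ satisfies $a*w_i-\phi(a)\,w_i\to0$ in $\|\cdot\|_1$ for $a\in\Seg$, while $\phi(w_i)=(\phi\otimes\phi)(u_i)=\phi(\pi(u_i))\to1$, so $\liminf_i\|w_i\|_1\ge1$. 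Normalising $\widehat w_i=w_i/\|w_i\|_1$ first (which restores boundedness) upgrades this to $a*\widehat w_i-\phi(a)\widehat w_i\to0$ for \emph{all} $a\in L^1(G)$; a standard positivity/symmetrisation step then gives Reiter's property $(P_1)$, so $G$ is amenable. The approximately amenable case reduces to this one: by Theorem~\ref{aa pa}(1), $\Seg^\sharp$ is pseudo-amenable, and the same augmentation-contraction, carried out on $\Seg^\sharp$, yields the approximately invariant net.

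The remaining implication, the forward direction of (1) that pseudo-contractibility forces $G$ compact, is where the real work lies. A central approximate diagonal yields a central approximate identity in $\Seg$, hence in $L^1(G)$, so $G$ is at least SIN, and by the characterization above $\Seg$ is approximately biprojective. The \textbf{main obstacle} is upgrading ``SIN'' to ``compact'': one must show that a central approximate diagonal $(u_i)\subset\Seg\hot\Seg\subseteq L^1(G\times G)$ whose image under $\pi$ is an approximate identity cannot exist on a non-compact group. Heuristically, centrality forces $(u_i)$ to concentrate its mass invariantly on a translate of the anti-diagonal $\{(s,s^{-1}r)\}$, and the condition $\pi(u_i)*a\to a$ keeps that mass bounded below; but the relevant invariant set carries finite measure only when the Haar measure of $G$ is finite, that is, only when $G$ is compact. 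Making this concentration-and-finiteness argument precise — in contrast to the essentially formal cut-down used for the other parts — is the crux of the proof.
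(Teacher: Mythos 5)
Your treatment of parts (2), (3), and of the ``compact $\Rightarrow$ pseudo-contractible'' half of (1) is essentially sound, and it is in the spirit of the cut-down arguments of the cited sources \cite{GZ, CGZ} (the survey itself gives no proof of this theorem). The genuine gap is the forward half of (1): you leave ``pseudo-contractible $\Rightarrow$ $G$ compact'' as an acknowledged heuristic about mass concentrating on the anti-diagonal, and a heuristic is not a proof --- nothing in your sketch converts approximate concentration into a contradiction with infinite Haar measure, nor is a precise statement even formulated. What you missed is that the device you already deploy in part (2) closes this gap immediately, and in a \emph{stronger} form because of centrality: if $(u_i)$ is a \emph{central} approximate diagonal and $\phi$ is the augmentation character, then applying $\mathrm{id}\otimes\phi$ to the exact identity $a\cdot u_i = u_i\cdot a$ yields the exact (not approximate) eigenvector equation $a * w_i = \phi(a)\, w_i$ for all $a\in \Seg$, where $w_i = (\mathrm{id}\otimes\phi)(u_i)$; since $\phi(w_i)=\phi(\pi(u_i))\to 1$, some $w_i\neq 0$. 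By density of $\Seg$ in $L^1(G)$ the equation passes to all $a\in L^1(G)$, and approximating point masses by probability densities gives $\delta_s * w_i = w_i$ for every $s\in G$. A nonzero left-translation-invariant function in $L^1(G)$ is locally a.e.\ equal to a nonzero constant, which forces the Haar measure of $G$ to be finite, i.e.\ $G$ compact. This invariant-function argument is exactly how the cited papers conclude compactness.

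A secondary, repairable flaw: your standing assumption that $\Seg$ is an essential Banach $L^1(G)$-\emph{bi}module with $\|g*f*h\|_S\le\|g\|_1\|f\|_S\|h\|_1$ is false for general Segal algebras --- a Segal algebra is only a dense \emph{left} ideal of $L^1(G)$; the two-sided structure requires symmetry. Consequently the first legs $e_\al * p_k$ of your cut-down elements do not obviously lie in $\Seg$. The construction survives only because one can (and must) take the central approximate identity $(e_\al)$ to consist of class functions, i.e.\ elements central in all of $L^1(G)$ (which is what the SIN hypothesis supplies via the Kotzmann--Rindler-type construction), not merely elements commuting with $\Seg$: then $e_\al * p_k = p_k * e_\al \in L^1(G)*\Seg\subset \Seg$, and the same centrality is what makes your map $\Theta_\al$ an $L^1(G)$-bimodule morphism. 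With centrality in $L^1(G)$ stated and used explicitly, parts (2), (3) and the backward half of (1) stand; part (1) forward needs the argument above.
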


It is unknown whether or not $S^1(G)$ is always pseudo-amenable when $G$ is an amenable group.
It has been pointed out in \cite{CGZ} that a nontrivial symmetric Segal algebra is never boundedly approximately amenable. It was shown in \cite{DLZ} that $\ell^p(E)$ with pointwise multiplication is not approximately amenable if $E$ is an infinite set and $p\geq 1$. This implies that the Segal algebra $L^2(G)$ on an infinite compact abelian group $G$ is not approximately amenable due to the Plancherel Theorem. We also know that the Feichtinger Segal algebra on a compact abelian group is not approximately amenable \cite{CGZ}, some Segal algebras on the circle are not approximately amenable \cite{DLO}, and a nontrivial Segal algebra on $\mathbb R^n$ is not approximately amenable \cite{CG}. All these partial results suggest that the answer to the following question might be possible.

\begin{question}
Is it true that every nontrivial Segal algebra is not approximately amenable?
\end{question}

Let $\omega$ be a continuous weight function on a locally compact group $G$. Let $\Omega(x) = \omega(x)\omega(x^{-1})$ ($x\in G$). N. Gr\o nb\ae k showed in \cite{GRO} that the Beuring algebra $L^1(G, \omega)$ is amenable if and only if $L^1(G, \Omega)$ is amenable if and only if $\Omega$ is bounded and $G$ is an amenable group. Since $L^1(G, \omega)$ has a bounded approximate identity, from Theorem~\ref{ac is aa}, approximate amenability and pseudo-amenability are the same for it. One can see some results regarding generalized amenability of $L^1(G,\omega)$ in \cite{GL, GLZ, GSZ}. So far there is no example of approximately amenable but not amenable Beuring algebras.

\begin{question}
Is it true that $L^1(G,\omega)$ is approximately amenable (pseudo-amenable) if and only if it is amenable?
\end{question}

\subsection{Semigroup algebras}
Let $S$ be a semigroup. Consider the semigroup algebra $\ell^1(S)$. Amenability of $\ell^1(S)$ has recently been characterized in \cite{DLS}. There are boundedly approximately contractible semigroup algebras which are not amenable, and there are pseudo amenable but not approximately amenable semigroup algebras. The study of generalized amenability of semigroup algebras is still far away from completion.

\begin{thm}[\cite{GLZ}]
 If $\ell^1(S)$ is approximately amenable, then $S$ is a regular and amenable semigroup.
\end{thm}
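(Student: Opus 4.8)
The plan is to establish the two conclusions by separate arguments, each reducing a property of $S$ to the module and ideal theory already available for approximately amenable algebras. For amenability I would exploit the augmentation character $\varphi\colon\ell^1(S)\to\C$, $\varphi(\delta_s)=1$, which is multiplicative since $\varphi(\delta_s\delta_t)=\varphi(\delta_{st})=1$. As $\ell^1(S)$ is approximately amenable, $\ell^1(S)^\sharp$ is pseudo-amenable by Theorem~\ref{aa pa}(1), so it carries an approximate diagonal $(m_\alpha)\subset\ell^1(S)^\sharp\hot\ell^1(S)^\sharp$ with $\delta_s m_\alpha-m_\alpha\delta_s\to 0$ and $\pi(m_\alpha)\to e$. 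Writing $\varphi^\sharp$ for the unital extension of $\varphi$ and applying $\mathrm{id}\hot\varphi^\sharp$ to the second leg, I obtain a net $(\nu_\alpha)\subset\ell^1(S)^\sharp$ with $\varphi^\sharp(\nu_\alpha)=\varphi^\sharp(\pi(m_\alpha))\to 1$ and $\delta_s\nu_\alpha-\nu_\alpha\to 0$ for every $s\in S$; projecting onto $\ell^1(S)$ then yields $(f_\alpha)\subset\ell^1(S)$ with $\varphi(f_\alpha)\to 1$ and $\|\delta_s * f_\alpha-f_\alpha\|_1\to 0$.

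Next I would convert this approximately invariant net into an invariant mean. Set $\mu_\alpha=|f_\alpha|/\|f_\alpha\|_1$; these are genuine means (positive, norm one). From the pointwise estimate $|\delta_s * f|\le\delta_s * |f|$ and the reverse triangle inequality one gets $\||f|-\delta_s * |f|\|_1\le 2\|f-\delta_s * f\|_1$, and since $\|f_\alpha\|_1\ge|\varphi(f_\alpha)|\to 1$ this forces $\|\delta_s * \mu_\alpha-\mu_\alpha\|_1\to 0$. Any weak* cluster point of $(\mu_\alpha)$ in $\ell^\infty(S)^*$ is therefore a left-invariant mean, so $S$ is left amenable; applying $\varphi^\sharp\hot\mathrm{id}$ to the first leg gives, symmetrically, a right-invariant mean, so that $S$ is amenable.

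For regularity I would argue by contradiction, and here I expect the real difficulty to lie. Theorem~\ref{a.i.}(1) gives one-sided approximate identities, whence $s\in sS\cap Ss$ for each $s$, but this is strictly weaker than regularity — $(\mathbb{Z},\cdot)$ satisfies it yet is not regular — so the approximate identity alone cannot suffice and the full module strength of approximate amenability must be used. Suppose $s$ is not regular. Then the principal factor of its $\mathcal{J}$-class is a null semigroup: with $I=S^1sS^1$ and $I'$ the ideal of elements strictly $\mathcal{J}$-below $s$, the Rees quotient $T=S/I'$ satisfies $\ell^1(T)\cong\ell^1(S)/\ell^1(I')$ and so is approximately amenable (approximate amenability passing to quotients), while $J^0:=I/I'$ is a closed two-sided ideal of $\ell^1(T)$ with $J^0\cdot J^0=\{0\}$ and $s\neq 0$ in $J^0$. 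The canonical norm-one projection $\ell^1(T)\to\ell^1(J^0)$ exhibits $\ell^1(J^0)$ as a weakly complemented ideal, so by the result on weakly complemented ideals above it must have a one-sided approximate identity. But in a null semigroup every product $\delta_x\delta_y$ equals $\delta_0$, so for $x\neq 0$ the net $\delta_x e_\alpha$ stays in $\C\delta_0$ and cannot approach $\delta_x$; thus $\ell^1(J^0)$ has no one-sided approximate identity, a contradiction. Consequently every $s$ is regular, and the theorem follows. The crux is precisely this regularity step: recognizing that one must descend to the principal factor and read off non-regularity as the null-semigroup obstruction, since the approximate identity by itself is too weak to detect it.
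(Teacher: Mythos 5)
Your regularity argument contains a genuine gap: the structure-theoretic claim ``if $s$ is not regular then the principal factor of its $\mathcal{J}$-class is null'' is false. By Clifford--Preston, a principal factor is simple, $0$-simple, or null, and while a null principal factor does force every element of the class to be non-regular, the converse implication you need fails: a Baer--Levi semigroup $B$ is right simple, hence simple, so the principal factor of any $s\in B$ is $B$ itself (not null); yet $B$ has no idempotents and therefore no regular elements at all. So your contradiction only rules out null principal factors and says nothing about non-regular elements sitting in simple or $0$-simple principal factors; the proof does not close. The repair is simpler than your reduction and needs no structure theory: fix $s\in S$ and apply the weakly-complemented-ideal theorem (the very result you invoke at the end) directly to the closed left ideal $J=\ell^1(S^1s)$, which is weakly complemented because it is the $\ell^1$-space of a subset of $S$. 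Then $J$ has a right approximate identity $(u_i)$, so $\delta_s * u_i\to\delta_s$; but $\delta_s * u_i\in\ell^1(sS^1s)$, a closed subspace, whence $\delta_s\in\ell^1(sS^1s)$, i.e. $s\in sS^1s$, and both cases $s=s^2$ and $s=sxs$ ($x\in S$) give regularity. This direct argument is the one in \cite{GLZ} (note that the survey itself gives no proof of this theorem; it only cites \cite{GLZ}).

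The amenability half of your proposal is essentially correct and close to the standard route: passing to an approximate diagonal for $\ell^1(S)^\sharp$ via Theorem~\ref{aa pa}(1), sliding one leg through the augmentation character, and using the estimate $\bigl\||f|-\delta_s * |f|\bigr\|_1\le 2\|f-\delta_s * f\|_1$ to convert approximately invariant vectors into an invariant mean is sound. Two points are left implicit but are routine: the $\C e$-component of $\nu_\alpha$ must be shown to vanish in the limit (it does, since the $\ell^1$-sum norm on $\ell^1(S)\oplus\C e$ controls both components of $\delta_s\nu_\alpha-\nu_\alpha$), and the final step ``left invariant mean plus right invariant mean implies a two-sided invariant mean'' is Day's theorem and should be cited rather than asserted.
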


We have known that the bicyclic semigroup $S_1 =< a, b : ab=1>$ is  regular and amenable, but $\ell^1(S_1)$ is not approximately amenable \cite{Gheorghe-Z}.
 Let $\Lambda_\vee$ be the semigroup of a totally ordered set with the product $a\vee b = max\{a,b\}$ ($a,b\in \Lambda_\vee$). the semigroup algebra $\ell^1(\Lambda_\vee )$ is boundedly approximately contractible, but if $\Lambda_\vee$ is an uncountable well-ordered set, then $\ell^1(\Lambda_\vee )$ is not sequentially approximately amenable \cite{CGZ}.
Let $S_b$ be a Brandt semigroup over a group $G$ with an index set $\mathbb I$. Then $\ell^1(S_b)$ is pseudo-amenable if $G$ is amenable; If $\mathbb I$ is infinite, then $\ell^1(S_b)$ is not approximately amenable \cite{Sadr}.

\begin{question}
How to characterize approximate amenability and pseudo-amenability of a semigroup algebra?
\end{question}

\subsection{Other algebras}
Let $H$ be a Hilbert space of infinite dimension. For each $p\geq 1$, it has been shown in \cite{CG} that the Schatten $p$-class algebra $S_p(H)$ is not approximately amenable.
 Let $X$ be an infinite metric space and let $0<\al\leq 1$. Then both Lipschitz algebras $Lip_\al (X)$ and $lip_\al (X)$ are not approximately amenable \cite{GLZ, CG}.
Since $Lip_\al (X)$ and $lip_\al (X)$ are unital, they are not pseudo-amenable.

To the author's knowledge, so far there is no investigation in the literature about generalized amenability of uniform algebras.

\begin{question}
Is there a non-amenable but approximately amenable or pseudo-amenable uniform algebra?
\end{question}

Since uniform algebras are abelian, due to Theorem~\ref{weak}, the above question relates closely to a well-known open question asking whether there is a non-amenable uniform algebra that is weakly amenable.

Let $X$ be a Banach space. Denote by $\K(X)$ the Banach algebra of compact operators on $X$ with the composition multiplication and the operator norm topology. We wonder if there is an $X$ such that $\K(X)$ is not amenable but approximately amenable or pseudo-amenable. In general, the following question is open.

\begin{question}
When is $\K(X)$ approximately amenable? When is it pseudo-amenable?
\end{question}

Let $G$ be a discrete group. The reduced group C* algebra $C_r^*(G)$ (and the full group C* algebra $C^*(G)$) is approximately amenable if and only if it is amenable \cite{CGZ}. From Theorem \ref{aa psa}, for a general C*-algebra, approximate amenability is the same as pseudo-amenability since it has a bounded approximate identity. We end the paper with the following question.

\begin{question}
Is there an approximately amenable but not amenable C*-algebra? If yes, how to characterize approximate amenability for a C* algebra?
\end{question}

This paper is based on a lecture delivered at the 19$^\mathrm{th}$ 
International Conference on Banach Algebras held at B\c{e}dlewo, July 14--24, 
2009. The support for the meeting by the Polish Academy of Sciences, the 
European Science Foundation under the ESF-EMS-ERCOM partnership, and the 
Faculty of Mathematics and Computer Science of the Adam Mickiewicz University 
at Pozna\'n is gratefully acknowledged.

\end{document}